\newtheorem{thm}{Theorem}[section]
\newtheorem{prop}[thm]{Proposition}
\newtheorem{cor}[thm]{Corollary}
\newtheorem{lem}[thm]{Lemma}
\newtheorem{claim}[thm]{Claim}
\theoremstyle{definition}
\newtheorem{deff}[thm]{Definition}
\newtheorem{rem}[thm]{Remark}
\newtheorem{ex}[thm]{Example}
\newcommand{\sheaf}[1]{\mathcal{#1}}
\newcommand{\E}{\sheaf{E}}
\newcommand{\F}{\sheaf{F}}
\newcommand{\G}{\sheaf{G}}
\newcommand{\I}{\sheaf{I}}
\renewcommand{\O}{\sheaf{O}}
\renewcommand{\P}{\sheaf{P}}
\newcommand{\T}{\mathcal{T}}
\newcommand{\PP}{\mathbb{P}} % projective space
\newcommand{\tensor}{\otimes} % tensor product
\newcommand{\onto}{\twoheadrightarrow} % surjection
\newcommand{\iso}{\cong} % isomorphism
\newcommand{\res}[2]{\left.#1\right|_{#2}} % restriction bar
\DeclareMathOperator{\Ker}{Ker} % kernel
\DeclareMathOperator{\Spec}{Spec} % prime spectrum
\DeclareMathOperator{\Soc}{Soc} % socle
\DeclareMathOperator{\supp}{Supp} % support
\DeclareMathOperator{\Fitt}{Fitt} % Fitting support/Fitting ideal
\DeclareMathOperator{\Pic}{Pic} % Picard group
\newcommand{\HHom}{\mathcal{H}\mathit{om}} % sheaf hom
\newcommand{\EExt}{\mathcal{E}\mathit{xt}} % sheaf ext
\newcommand{\set}[1]{\left\{#1\right\}} % set curly braces
\newcommand{\abs}[1]{\left\vert#1\right\vert} % absolute value
\title[Finite subschemes of abelian varieties]{Finite subschemes of abelian varieties and the
Schottky problem}
\author[M. G. Gulbrandsen]{Martin G. Gulbrandsen}
\address{Stord/Haugesund University College, Bj\o{}rnsons gate 45, NO-5528 Haugesund, Norway}
\email{martin.gulbrandsen@hsh.no}
\author[M. Lahoz]{Mart\'{\i} Lahoz}
\address{Departament d'\`Algebra i Geometria, Facultat de Matem\`atiques, Universitat
de Barcelona, Gran Via, 585, 08007 Barcelona, Spain}  \email{{\tt marti.lahoz@ub.edu
}}
\curraddr{Mathematisches Institut, Universit\"at Bonn, Endenicher Allee 60, 53115 Bonn, Germany.}
\thanks{The first author thanks the Royal Institute of Technology in Stockholm (KTH),
where he was a postdoctoral fellow while much of this work was carried out. The second author thanks the Universit\`a di
Roma ``Tor Vergata'' and Giuseppe Pareschi for his kind hospitality during the first steps of this work. He also thanks
the KTH for the one week invitation that allowed to strengthen the collaboration with the first author. The second
author was partially supported by the Proyecto de Investigaci\'on MTM2009-14163-C02-01.}
\begin{document}

\begin{abstract}
The Castelnuovo-Schottky theorem of Pareschi--Popa characterizes
Jacobians, among indecomposable principally polarized abelian varieties
$(A,\Theta)$ of dimension $g$, by the
existence of $g+2$ points $\Gamma\subset A$ in
special position with respect to $2\Theta$, but general with respect to $\Theta$, and furthermore
states that such collections of points must be contained in an Abel-Jacobi
curve. Building on the ideas in the original paper,
we give here a self contained, scheme theoretic proof of the theorem,
extending it to finite, possibly nonreduced subschemes $\Gamma$.
\end{abstract}

\maketitle

\section{Introduction}\label{sec:intro}

There is a classical result in projective geometry, due to
Castelnuovo, saying that a large, but finite, collection of
points in $\PP^r$ which is in linearly general position, but in
sufficiently special position with respect to quadrics, is
contained in a unique rational normal curve.

Pareschi--Popa \cite{PPschottky} have
discovered an analogy for $g$-dimensional indecomposable principally polarized abelian
varieties $(A,\Theta)$, where divisors algebraically equivalent
to $\Theta$ play the role of hyperplanes, and divisors
algebraically equivalent to $2\Theta$ play the role of quadrics.
The Castelnuovo result of Pareschi--Popa says that if we have $g+2$
points on $A$, in a suitable sense general with respect to
$\Theta$, but special with respect to $2\Theta$, then $A$ is the
Jacobian of a curve $C$, and the $g+2$ points are contained in an
Abel-Jacobi curve, i.e.~a translate of $C$, embedded
into its Jacobian. Thus Abel-Jacobi curves play the role of
rational normal curves, and the analogue of Castelnuovo's result
contains a Schottky statement (precise definitions of the terms
occurring in the assumption are given in the next section):

\begin{thm}\label{thm}
Let $\Gamma\subset A$ be a theta-general finite subscheme of degree $g+2$,
imposing less than $g+2$ conditions on general
$2\Theta$-translates. Then the following holds:
\begin{enumerate}
\item \textbf{Schottky:} The indecomposable principally polarized abelian variety $(A,\Theta)$ is
isomorphic to a Jacobian $J(C)$ of a curve $C$, with its canonical
polarization.
\item \textbf{Castelnuovo:} There is an isomorphism $A\iso J(C)$ as above
such that the subscheme $\Gamma$ is contained in an
Abel-Jacobi curve.
\end{enumerate}
\end{thm}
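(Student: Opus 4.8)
Part~(2) contains the isomorphism asserted in part~(1), so it is enough to prove~(2). By the Matsusaka--Ran criterion (which, for self-containedness, one would reprove in the needed form) this reduces to constructing a curve $C\subset A$ with $\Gamma\subset C$ and minimal cohomology class $[C]=\frac{[\Theta]^{g-1}}{(g-1)!}$: for such a $C$ is automatically smooth and irreducible, $(A,\Theta)\iso(J(C),\Theta_{C})$ with $C$ an Abel--Jacobi curve, and $\Gamma\subset C$ then gives both statements at once. So the whole game is to build this curve, and the plan is to extract it from a Fourier--Mukai transform of a sheaf assembled from $\Gamma$ and $\Theta$.

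First I would translate the two hypotheses into homological statements. Using the principal polarization to identify $\widehat A\iso A$, set $\G=\I_{\Gamma}\tensor\O_{A}(\Theta)$. Since $\Gamma$ is finite, $\G\tensor P_{\alpha}$ ($P_{\alpha}$ the degree-zero line bundle attached to $\alpha\in\widehat A$) has cohomology in degrees $0$ and $1$ only, for every $\alpha$, with $\chi(\G\tensor P_{\alpha})=-(g+1)$; theta-generality should be precisely the assertion that $\G$ is as homologically generic as this Euler characteristic permits, so that its Fourier--Mukai transform $\widehat\G$ is, up to a torsion sheaf supported in codimension at least $2$, a single sheaf $\F$ of rank $g+1$ on $\widehat A$. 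Using $\O_{A}(2\Theta)=\O_{A}(\Theta)\tensor\O_{A}(\Theta)$, the interchange of tensor product and Pontryagin product under Fourier--Mukai, and $\widehat{\O_{A}(\Theta)}\iso\O_{\widehat A}(-\widehat\Theta)$ (up to translation), the failure of $\Gamma$ to impose $g+2$ conditions on a general translate of $2\Theta$ becomes the non-vanishing $h^{1}(A,\I_{\Gamma}\tensor\O_{A}(2\Theta)\tensor P_{\alpha})\neq 0$ for general $\alpha$, and hence the statement that $\F$ fails the $M$-regularity (in the sense of Pareschi--Popa) it generically satisfies: a cohomological support locus of a suitable twist of $\F$ is all of $\widehat A$ rather than a proper subvariety. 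Keeping track, through this interchange, of the codimension-$\geq 2$ torsion tolerated by the weak form of theta-generality is a genuine, if secondary, technical point.

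The heart of the matter --- and the step I expect to be the main obstacle --- is to read a curve off this defect of $M$-regularity, scheme-theoretically enough to handle a nonreduced $\Gamma$. Following the Pareschi--Popa analysis, the failure of $M$-regularity forces the extra cohomology of $\F$ to be concentrated on a proper subscheme of $\widehat A$ --- equivalently, it produces a nonzero morphism from a twist of $\F$ to the line bundle $\O_{\widehat A}(\widehat\Theta)$, with image $\I_{Z}\tensor\O_{\widehat A}(\widehat\Theta)$ for a subscheme $Z$ --- and applying the inverse Fourier--Mukai functor to the resulting exact sequences produces an effective $1$-cycle $C\subset A$ carrying $\Gamma$. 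Geometrically $C$ is the scheme-theoretic base locus of the translates of $2\Theta$ through $\Gamma$, the abelian analogue of the rational normal curve cut out by the quadrics through a Castelnuovo configuration. The delicate part is then to prove that $C$ is genuinely one-dimensional, reduced and irreducible --- so that the Jacobi criterion applies --- while still scheme-theoretically containing the possibly nonreduced $\Gamma$: a crude intersection of translates of $2\Theta$ may pick up embedded points or spurious higher-dimensional components, and it is exactly the work of excluding these, by way of Fitting ideals rather than incidences among points, that goes beyond the reduced case treated by Pareschi--Popa.

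It remains to identify the cohomology class of $C$. Combining the Chern-character computation for $\F$ --- or, equivalently, a B\'ezout-type count for the base locus of the translates of $2\Theta$ --- with the theta-generality of $\Gamma$, which excludes excess components and is where the exact value $g+2$ comes in, one finds $[C]=\frac{[\Theta]^{g-1}}{(g-1)!}$; in particular $C$ generates $A$, so that Matsusaka's inequality $C\cdot\Theta\geq g$ is an equality and leaves no room for more. The Matsusaka--Ran criterion then shows that $C$ is a smooth irreducible curve and produces an isomorphism $(A,\Theta)\iso(J(C),\Theta_{C})$ under which $C$ is an Abel--Jacobi curve; since $\Gamma\subset C$, both the Schottky and the Castelnuovo conclusions of the theorem follow.
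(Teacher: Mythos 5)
Your proposal is a plan rather than a proof: the step you yourself flag as ``the heart of the matter'' --- extracting from the failure of $M$-regularity an honest one-dimensional, reduced, irreducible curve $C$ containing $\Gamma$, and computing its class --- is exactly the content of the theorem, and nothing in the proposal carries it out. Concretely: (a) the assertion that the defect of $M$-regularity ``produces a nonzero morphism from a twist of $\F$ to $\O_{\widehat A}(\widehat\Theta)$ with image $\I_Z\tensor\O_{\widehat A}(\widehat\Theta)$'' is unjustified --- there is no a priori reason such a morphism exists, that its image has this shape, or that $Z$ is a curve rather than, say, a divisor or a higher-codimension scheme with embedded components; (b) the identification of $C$ with the base locus of the $2\Theta$-translates through $\Gamma$, and the claim that a B\'ezout-type count plus theta-generality forces $[C]=[\Theta]^{g-1}/(g-1)!$, are precisely the statements one would have to prove, and no mechanism is offered; (c) even granted an effective $1$-cycle of minimal class, you would still need it to be a genuine curve through $\Gamma$ and to generate $A$ before Matsusaka--Ran applies. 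So there is a genuine gap at the central step, and it is not clear the route can be completed at all: producing a curve of minimal class directly from the generic-vanishing data is essentially a case of the Pareschi--Popa minimal class problem, which is harder than the theorem.

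The paper deliberately avoids constructing the curve directly. It instead establishes, for chains $\Gamma_{g-1}\subset\Gamma_g\subset\Gamma_{g+1}\subset\Gamma$, schematic inclusions of the form $\Theta_{a-y}\cap\Theta_{b-x}\subseteq T(S)_{-y}\cup\Theta_{a-y'}$ for every $y$ in the positive-dimensional locus $T(\Gamma_{g-1})\setminus T(\Gamma_g)$ (Propositions \ref{prop:trinotation} and \ref{prop:tri}); each such inclusion is a trisecant (or tangent) line to the Kummer variety, so the Gunning--Welters criterion produces the Jacobian structure and an Abel--Jacobi curve through a degree-$3$ piece of $\Gamma$. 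The Castelnuovo statement then needs a separate maximality argument (Lemma \ref{lem:Tsum} plus nondegeneracy of the $W_{g-k}$), and the whole machine rests on preparatory facts for nonreduced $\Gamma$ --- Gorenstein-ness, curvilinearity, the Key Lemma \ref{lem:key} --- that have no counterpart in your outline. Your translation of the hypotheses into Fourier--Mukai language (superabundance as non-vanishing of $R^1\mathcal{S}(\I_\Gamma(2\Theta))$ on all of $A$, theta-generality as a codimension-$2$ bound on the dependence loci) does match the paper's starting point, but everything after that is missing.
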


The theorem was proved by Pareschi--Popa \cite{PPschottky} for reduced subschemes $\Gamma$.
The aim of this article is the extension to possibly nonreduced subschemes $\Gamma$, 
as Eisenbud--Harris did in the projective case for the classical Castelnuovo result \cite[Thm. 2.2]{EH2}.
Moreover, in the proof of Pareschi--Popa for reduced subschemes, the scheme structure needed to prove the existence of
trisecants is implicit (see Remark \ref{rem:objection}).
Since we work scheme theoretically from the start, our work may serve to clarify the situation. 

Grushevsky gave also a very similar result \cite[Thm. 3]{grushevsky-err} in the reduced case. His hypothesis are
slightly different and he uses the analytic theory of theta functions to prove it. He uses this result to find equations
for the locus of hyperelliptic Jacobians.

The Schottky and Castelnuovo statements are proved in Sections \ref{sec:schottky} and \ref{sec:castelnuovo},
respectively.
Both results depend on an analysis of theta-duals and dependence loci of subschemes of $\Gamma$;
these concepts are introduced in the preliminary Section \ref{sec:prelim} and further analysed in Section \ref{sec:key}.
The proof of the Schottky result is based on the criterion of Gunning and Welters,
characterizing Jacobians among indecomposable principally polarized abelian varieties by trisecants to the associated
Kummer variety.

The full trisecant conjecture,
which characterizes Jacobians by the existence of a single trisecant,
has now been proved by Krichever \cite[Thm.~1.1]{krichever} over the complex numbers.
Pareschi--Popa have kindly pointed out that the trisecant conjecture, which was not proved when their work appeared,
would simplify their argument.
It seems to us that our version of the argument can not be shortened substantially, as the construction produces a
family of trisecants in any case.
An advantage of using only Gunning--Welters is that our proof of Theorem \ref{thm} is entirely algebraic and valid in
arbitrary characteristic different from $2$ (see \cite[Remark 0.7]{Wcrit}).

The main line of argument is borrowed from Pareschi--Popa,
although many (set theoretic) statements did not seem to translate well into schematic ones,
and had to be substituted.
Moreover, new phenomena occur when $\Gamma$ is nonreduced,
e.g.~already the fact that $\Gamma$ is contained in a nonsingular curve (i.e.~$\Gamma$ is curvilinear) is not obvious.
Logically, our work does not depend on \emph{loc.~cit.}, and can be read independently.

The converse to the theorem is easy, as we
explain in Section \ref{sec:super}: a finite degree $g+2$ subscheme
$\Gamma$ of an Abel-Jacobi curve $C$ imposes less than $g+2$ conditions on
general $2\Theta$-translates in the Jacobian.

\textbf{Acknowledgments} We are grateful to Giuseppe Pareschi, Mihnea Popa and Gerald Welters for valuable discussions.
The second author thanks also Miguel \'Angel Barja and Joan Carles Naranjo for numerous conversations.

\section{Preliminaries}\label{sec:prelim}

Throughout, $(A,\Theta)$ denotes a principally polarized abelian
variety over an algebraically closed field of characteristic different from $2$.
We assume that the divisor
$\Theta$ is symmetric and irreducible. For each point $a\in A$ we
denote by $t_a\colon A\to A$ the translation map. For any subscheme
$Y\subset A$, we write $Y_a$ for the \emph{inverse} image
$Y-a$ under $t_a$.

The \emph{support} $\supp \F$ of a coherent sheaf $\F$ is defined as a subscheme by the annihilator ideal.
We will also make use of the scheme structure, on the same underlying set, defined by the Fitting ideal of $\F$, which
has the advantage of respecting base change.
This will be called the \emph{Fitting support} and denoted $\Fitt \F$.

Inclusions, intersections and unions of subschemes are always to be understood scheme theoretically.
Unions are usually defined by intersection of ideals; we also make use of the scheme structure defined by the product of
ideals, but point this out whenever it is of importance.
When $Y\subseteq X$ are two subschemes of some ambient scheme,
we write $\I_{Y/X} = \I_Y/\I_X$ for the ideal of $Y$ in $\O_X$.

\subsection{The Fourier-Mukai transform}
 
Using the principal polarization $\Theta$, we identify $A$ with
its dual $\Pic^0(A)$.
Thus, the Poincar\'e line bundle $\P$ is identified with the Mumford line bundle $\O_{A\times A}(m^*\Theta - p_1^*\Theta
- p_2^*\Theta)$, where $m$ denotes the group law and
$p_1$ and $p_2$ are the projections. 
The restriction of $\P$ to $A\times\set{a}$ is the homogeneous line bundle $\P_a = \O_A(\Theta_a-\Theta)$.

Following Mukai \cite{mukai},
we define a left exact endofunctor $\mathcal{S}$ on the
category of $\O_A$-modules by
\begin{equation*}
\mathcal{S}(\E) = p_{2*}(p_1^*(\E)\otimes \P).
\end{equation*}
The Fourier-Mukai functor is the total derived functor of $\mathcal{S}$,
and is an autofunctor on the derived category of $A$.
We will not make use of this, and will
just work with $\mathcal{S}$ and its right derived functors $R^i\mathcal{S}$, and
will sloppily refer to these as Fourier-Mukai functors.

\begin{deff}[Mukai \cite{mukai}]
Let $\E$ be an $\O_A$-module.
\begin{enumerate}
\item $\E$ satisfies the \emph{weak index theorem} with index
$i$, abbreviated WIT$_i$, if we have
\begin{equation*}
R^p\mathcal{S}(\E) = 0\quad\text{for all $p\ne i$.}
\end{equation*}
\item $\E$ satisfies the \emph{index theorem} with index
$i$, abbreviated IT$_i$, if we have
\begin{equation*}
H^p(A, \E\otimes\P_a) = 0\quad\text{for all
$a\in A$ and all $p\ne i$.}
\end{equation*}
\end{enumerate}
\end{deff}

\begin{deff}[Mukai \cite{mukai}]
If $\E$ satisfies WIT$_i$, its \emph{Fourier-Mukai
transform} is
\begin{equation*}
\widehat{\E} = R^i\mathcal{S}(\E).
\end{equation*}
\end{deff}

By the base change theorem in cohomology, IT$_i$ implies WIT$_i$,
and the Fourier-Mukai transform of a sheaf satisfying IT is locally free.

We refer to Mukai's original paper \cite{mukai} for the basic properties of the Fourier-Mukai functor.

\subsection{Theta-duality}

Given a morphism $f\colon T\to A$, we define translation $t_f$ along $f$
to be the composite
\begin{equation*}
t_f\colon A\times T \xrightarrow{1\times f} A\times A \xrightarrow{m} A.
\end{equation*}

\begin{deff}
The \emph{theta-dual} of a closed subscheme $Y\subseteq A$ is the unique closed subscheme $T(Y)\subseteq A$
with the universal property that an arbitrary morphism $f\colon T\to A$ factors through $T(Y)$
if and only if $Y\times T \subseteq t_f^{-1}(\Theta)$ inside $A\times T$.
\end{deff}

Thus, as a set, $T(Y)$ consists of those points $a\in A$ for which the theta-translate $\Theta_a$ contains $Y$ as a
scheme.
We want to show that the theta-dual always exists as a scheme,
and equals the object with the same name defined in Pareschi--Popa \cite{PPminimal}.

\begin{lem}\label{lem:zero}
Let $\pi\colon X\to S$ be a projective morphism of schemes, with $S$
noetherian.
\begin{enumerate}
\item Let $\phi\colon \F \to \G$ be a homomorphism of coherent $\O_X$-modules such that $\G$ is $S$-flat.
Then there exists a unique closed subscheme $Z_{\pi}(\phi)\subseteq S$ with the universal property
that an arbitrary morphism $f\colon T\to S$ factors through $Z_{\pi}(\phi)$ if and only if
$f_T^*(\phi)=0$ as a homomorphism of coherent modules on $X_T$.
\item Assume $\pi$ is flat, and let $X'\subseteq X$ be a closed subscheme.
Then there exists a unique closed subscheme $S' \subseteq S$ with the universal property
that an arbitrary morphism $f\colon T\to S$ factors through $S'$ if and only if
$(X')_T = X_T$.
\end{enumerate}
\end{lem}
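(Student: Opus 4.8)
The plan is to realize both subschemes as vanishing loci of suitable coherent sheaves on $S$, using that $S$ is noetherian and $\pi$ is projective so that pushforwards are coherent and commute with flat base change.

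For part (1): since $\G$ is $S$-flat and $\pi$ is projective, I would first reduce to the case where $S=\Spec R$ is affine and then use the standard ``local criterion'' construction. The homomorphism $\phi$ corresponds, after choosing a relatively ample sheaf and twisting by a sufficiently high power, to a map of the form $\pi_*\phi(n)\colon \pi_*\F(n)\to\pi_*\G(n)$ of coherent $\O_S$-modules; because $\G$ is flat and the twist is large, for every base change $f\colon T\to S$ one has $f^*\pi_*\G(n)=\pi_{T*}(\G_T(n))$ and the analogous statement lets one recover $f_T^*\phi$ from $f^*(\pi_*\phi(n))$ — in particular $f_T^*\phi=0$ if and only if $f^*(\pi_*\phi(n))=0$ for all large $n$. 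The latter vanishing is controlled by a single coherent $\O_S$-module: let $\mathcal{C}$ be the image sheaf (or cokernel of $\pi_*\F(n)\to\pi_*\G(n)$ composed appropriately), and take $Z_\pi(\phi)$ to be the closed subscheme defined by $\Fitt_0$ of this image, or more simply by the ideal generated by the ``matrix entries'' of $\phi$ — i.e.\ the smallest closed subscheme over which the map dies. One must check this is independent of $n$; the honest way is to define $Z_\pi(\phi)$ by the annihilator-type ideal $\mathrm{ann}(\Im\phi)$ pushed down, and verify the universal property directly against test morphisms, using flatness of $\G$ to know that $f_T^*$ of an injection $\Im\phi\hookrightarrow\G$ detects when the image becomes zero.

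For part (2): apply part (1) to the structure-sheaf quotient $\phi\colon \O_X\onto\O_{X'}$, whose kernel is $\I_{X'}$. Since $\pi$ is flat, $\O_X$ is $S$-flat, so part (1) does not apply directly to $\phi$ (we need the \emph{target} flat, and $\O_{X'}$ need not be $S$-flat). Instead, note $(X')_T=X_T$ means the surjection $\O_{X_T}\onto\O_{(X')_T}$ is an isomorphism, equivalently its kernel $(\I_{X'})_T$ — which by $S$-flatness of $\O_X$ is the image of $(\I_{X'})_T\to\O_{X_T}$ — vanishes. So I would apply part (1) to the inclusion $\phi\colon\I_{X'}\hookrightarrow\O_X$ (whose target $\O_X$ \emph{is} $S$-flat by the flatness assumption on $\pi$): then $f_T^*(\phi)=0$ is exactly the condition that the pulled-back ideal maps to zero, i.e.\ $(\I_{X'})_T=0$ as a subsheaf of $\O_{X_T}$, i.e.\ $(X')_T=X_T$. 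Hence $S'=Z_\pi(\phi)$ works, provided one checks that $f_T^*\I_{X'}\to\O_{X_T}$ has the same image as $\I_{(X')_T}$, which again is exactly the flatness of $\O_X$ over $S$ (no $\mathrm{Tor}$ obstruction to pulling back the ideal).

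The main obstacle is the bookkeeping in part (1): making the passage from ``$f_T^*\phi=0$'' to a condition on a fixed coherent sheaf on $S$ genuinely base-change compatible. The cleanest route is to avoid the twisting argument and instead define $Z_\pi(\phi)$ affine-locally by the ideal $\mathrm{ann}_{\O_S}(\pi_*\EExt^0$-type data$)$ — concretely, over $\Spec R$ write $\Im\phi\subseteq\G$, which is $R$-flat-adjacent, and use that $\Im(f_T^*\phi)=f_T^*(\Im\phi)$ because $\G$ flat implies $\Im\phi\hookrightarrow\G$ stays injective after any base change (this is the one place flatness of $\G$ is essential, and it is what makes $\Im\phi$ behave like a flat-family-of-modules for the purpose of detecting vanishing). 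Then $f_T^*(\Im\phi)=0$ iff $f$ factors through $V(\Fitt_0(\pi_*(\Im\phi\otimes\text{ample}^n)))$ for $n\gg0$, and stability of Fitting ideals under base change gives the universal property. Gluing the affine pieces and checking uniqueness (which is automatic, since a subscheme representing a functor of points is unique) finishes the argument.
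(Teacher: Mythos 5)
Your part (2) is exactly the paper's reduction: apply part (1) to the inclusion $\I_{X'}\hookrightarrow\O_X$, whose target is $S$-flat because $\pi$ is, and note that $\I_{(X')_T}$ is the image of $f_T^*\I_{X'}\to\O_{X_T}$, so $f_T^*(\phi)=0$ iff $(X')_T=X_T$. The first half of your part (1) is also the paper's argument: twist by a large power of a relatively ample sheaf so that $\F(n)$ and $\G(n)$ are globally generated over $S$ and, by flatness of $\G$, $\pi_*\G(n)$ is locally free and commutes with base change; then $f_T^*\phi=0$ iff $f^*(\pi_*\phi(n))=0$, and the zero scheme of a map of coherent sheaves whose \emph{target is locally free} is cut out by the ideal of matrix entries, i.e.\ the image of $\pi_*\F(n)\otimes(\pi_*\G(n))^\vee\to\O_S$. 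The worry about independence of $n$ that sends you off this track is unfounded: fix one sufficiently large $n$, verify the universal property, and the uniqueness of a representing subscheme (which you yourself invoke at the end) does the rest.

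The route you then designate as ``the honest/cleanest way'' is, however, wrong on three counts, and since you commit to it at the crux of part (1) this is a genuine gap. First, flatness of $\G$ does \emph{not} imply that $\Im\phi\hookrightarrow\G$ stays injective after base change: take $X=S=\Spec k[t]$ and $\phi = t\colon \O_S\to\O_S$, so $\Im\phi=(t)$; after base change to $\Spec k[t]/(t)$ the map $f^*\bigl((t)\bigr)\iso k\to k$ is zero, not injective. Consequently $f_T^*\phi=0$ is equivalent to the vanishing of the \emph{induced map} $f_T^*(\Im\phi)\to\G_T$, not to $f_T^*(\Im\phi)=0$ (in the example $f_T^*(\Im\phi)\iso k\ne 0$ while $f_T^*\phi=0$). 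Second, even granting that equivalence, ``$f^*M=0$'' is the condition $\Fitt_0(f^*M)=\O_T$, i.e.\ $f$ lands in the \emph{open} complement of $V(\Fitt_0 M)$; it is not represented by the closed subscheme $V(\Fitt_0 M)$. Third, in the example both $\Ann(\Im\phi)$ and $\Fitt_0(\Im\phi)$ vanish, since $(t)\iso\O_S$ as a module, so your recipe returns all of $S$, whereas $Z_\pi(\phi)$ must be $V(t)$ --- which is exactly what the matrix-entry ideal gives. (Relatedly, $\Fitt_0$ of the image of $\pi_*\phi(n)$ is not the same as the ideal generated by the matrix entries; only the latter is correct.) The flatness of $\G$ is needed precisely where the paper uses it: to make $\pi_*\G(n)$ locally free, which is what turns ``the map dies after pullback'' into a closed condition.
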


We call the scheme $Z_{\pi}(\phi)\subseteq S$ in part (1) the relative zero scheme of $\phi$. Before proving the Lemma,
we conclude that theta-duals exist.

\begin{prop}\label{prop:thetadual}
The theta-dual $T(Y)$ exists, for any closed subscheme $Y\subseteq A$.
\end{prop}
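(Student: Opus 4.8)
The plan is to deduce Proposition~\ref{prop:thetadual} directly from Lemma~\ref{lem:zero} by exhibiting $T(Y)$ as an instance of one of the universal subschemes produced there. The natural setup is to take $X = A \times A$ with $\pi = p_2 \colon A\times A \to A$ the second projection, which is projective and flat, and to look at the divisor $m^{-1}(\Theta) \subseteq A\times A$. Pulling back along $f\colon T\to A$ in the second factor, the fibre product $X_T$ is $A\times T$ and the base-changed subscheme is exactly $t_f^{-1}(\Theta)$, since $t_f = m\circ(1\times f)$. So the condition ``$Y\times T \subseteq t_f^{-1}(\Theta)$ inside $A\times T$'' that defines $T(Y)$ is a condition about when a fixed closed subscheme gets swallowed by a varying one, after pullback.

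The cleanest way to phrase this is via part~(1) of Lemma~\ref{lem:zero}. Let $\Delta = m^{-1}(\Theta)$ and consider the composite homomorphism $\phi\colon \I_{\Delta/A\times A} \hookrightarrow \O_{A\times A} \onto \O_{Y\times A}$ of coherent sheaves on $X = A\times A$, where we restrict everything to the closed subscheme $Y\times A$ (equivalently, view $\phi$ as a map $\I_{\Delta}\tensor \O_{Y\times A}\to \O_{Y\times A}$, or as the composite $\I_\Delta \to \O_{Y\times A}$). The target $\G = \O_{Y\times A}$ is flat over $S = A$ (the second factor), because it is pulled back from $Y\subseteq A$ along $p_2$ and $Y$ is a fixed scheme over the base field. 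For a morphism $f\colon T\to A$, the pullback $f_T^*(\phi)$ vanishes precisely when the image of $\I_{\Delta}$ in $\O_{Y\times T}$ is zero, i.e.\ when $\I_{\Delta}\cdot\O_{Y\times T} = 0$, i.e.\ when $Y\times T$ factors through $\Delta_T = t_f^{-1}(\Theta)$ — which is exactly the defining condition of $T(Y)$. Hence $T(Y) := Z_{p_2}(\phi)$ has the required universal property, and uniqueness is immediate from the universal property (or from the uniqueness clause in the Lemma).

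I expect the main point requiring care to be the bookkeeping around base change: one must check that $(Y\times A)_T = Y\times T$ and $(m^{-1}\Theta)_T = t_f^{-1}(\Theta)$ as subschemes of $A\times T$, and that restriction of the homomorphism $\phi$ commutes with the base change $f_T$ — all of which is routine flatness/compatibility of pullbacks but should be stated. One should also note that $\O_{Y\times A}$ being $S$-flat is what licenses the use of part~(1); alternatively, if one wanted to use part~(2) instead, one would set $X = Y\times A$, $X' = (Y\times A)\cap m^{-1}(\Theta)$, and invoke flatness of $Y\times A \to A$ (clear, as $Y$ is a scheme over a field), obtaining $T(Y)$ as the locus where $X'_T = X_T$; this is arguably the more transparent route and I would mention it. Finally, one should remark that this construction agrees with the theta-dual of Pareschi--Popa \cite{PPminimal}, which follows because both satisfy the same universal property (or by comparing on the level of points and using reducedness/flatness arguments), as promised in the text preceding the proposition.
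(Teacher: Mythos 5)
Your proof is correct and essentially the same as the paper's: the paper applies part~(2) of Lemma~\ref{lem:zero} to $X'=m^{-1}(\Theta)\cap(Y\times A)\subseteq Y\times A$ over $A$ via $p_2$, and since part~(2) is itself proved by taking the relative zero scheme of $\I_{X'}\subset\O_X$, your direct use of part~(1) on $\I_{m^{-1}(\Theta)}\to\O_{Y\times A}$ is the same argument unpacked (and you note the part-(2) route explicitly).
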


\begin{proof}
Apply part (2) of Lemma \ref{lem:zero} to the
subscheme $X' = m^{-1}(\Theta)\cap (Y\times A)$ of $X = Y\times A$,
viewed as schemes over $A$ via second projection.
This gives the result, since $X'_T=X_T$ says that $t_f^{-1}(\Theta)\cap (Y\times T)=Y\times T$.
\end{proof}

\begin{proof}[Proof of Lemma \ref{lem:zero}]
The second part follows from the first:
take $S'\subseteq S$ to be the relative zero scheme of the inclusion $\phi\colon \I_{X'} \subset \O_X$.
Then the condition $f^*_T(\phi)=0$ is equivalent to $\I_{X'_T/X_T}=0$.

For the first part,
replace $\phi\colon \F\to\G$ with a twist with a sufficiently relatively ample invertible sheaf.
Then we may assume that
\begin{equation*}
\pi^*\pi_* \F \to \F,\quad \pi^*\pi_*\G \to \G
\end{equation*}
are surjective (i.e.~$\F$ and $\G$ are globally generated over $S$) and, using flatness of $\G$, that $\pi_* \G$ is
locally free.
The zero locus of
\begin{equation*}
\pi_*(\phi)\colon \pi_*\F\to\pi_*\G,
\end{equation*}
in the usual sense, is the required scheme $Z_{\pi}(\phi)$.
It is closed by Nakayama.
\end{proof}

Next we compare with the theta-dual as defined by Pareschi--Popa \cite{PPminimal},
which we recall: let $D(-)$ denote the dualization functor $\HHom(-,\O)$.
Then, working in the derived category, the theta-dual is defined in \emph{loc.~cit.} as
the support of the $g$'th cohomology sheaf of $(-1)^*R\mathcal{S} RD(\I_Y(\Theta))$.
We have
\begin{align*}
(-1)^*R\mathcal{S} RD(\I_Y(\Theta)) &= (-1)^*Rp_{2*}(p_1^*RD(\I_Y(\Theta)) \tensor \P) \\
&\iso Rp_{2*} (1\times(-1))^*(p_1^*RD(\I_Y(\Theta))\tensor\P)\\
&\iso Rp_{2*} (p_1^*RD(\I_Y(\Theta))\tensor\P^{-1})\\
&\iso Rp_{2*} ( RD(p_1^*(\I_Y(\Theta)) \tensor \O(-m^*\Theta+p_1^*\Theta+p_2^*\Theta))) \\
&\iso \left(Rp_{2*} \left( RD(p_1^*(\I_Y(\Theta))\tensor\O(m^*\Theta-p_1^*\Theta))\right)\right) \tensor \O(\Theta)\\
&\iso \left(R(p_{2*}\circ D) (\I_{Y\times A}(m^*\Theta)\right) \tensor \O(\Theta).
\end{align*}
(In the third line we used the identity $(1\times(-1))^*\P \iso \P^{-1}$, in the fourth line
we used $\P = \O(m^*\Theta-p_1^*\Theta-p_2^*\Theta)$ and in the fifth line we used the
projection formula.)
The twist by $\Theta$ clearly does not affect the supports of the cohomologies of this complex,
and the $i$'th derived functor of $p_{2*}\circ D$ is the relative Ext-sheaf $\EExt^i_{p_2}(-,\O_{A\times A})$.
Thus we find that the theta-dual according to \emph{loc.~cit.} is the support of
$\EExt^g_{p_2}(\I_{Y\times A}(m^*\Theta), \O_{A\times A})$.
The following thus shows that our theta-dual agrees with the theta-dual of Pareschi--Popa:

\begin{prop}\label{prop:PPthetadual}
The sheaf
$\EExt^g_{p_2}(\I_{Y\times A}(m^*\Theta), \O_{A\times A})$
is isomorphic to the structure sheaf $\O_{T(Y)}$ of the theta-dual.
\end{prop}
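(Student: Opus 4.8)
The plan is to compute the relative Ext-sheaf $\EExt^g_{p_2}(\I_{Y\times A}(m^*\Theta), \O_{A\times A})$ directly from the short exact sequence
\begin{equation*}
0 \to \I_{Y\times A}(m^*\Theta) \to \O_{A\times A}(m^*\Theta) \to \O_{Y\times A}(m^*\Theta) \to 0
\end{equation*}
on $A\times A$, regarded as a family over the second factor via $p_2$. Since $\O_{A\times A}(m^*\Theta)$ is a line bundle on the smooth $2g$-fold $A\times A$, and since $Y\times A$ has pure codimension $1$ in each fibre (the fibre of $p_2$ over $a$ is $\Theta_a\subset A$, a divisor, when $a\in T(Y)$, but in general $Y$ is finite so $Y\times A$ is a union of ``vertical'' components of dimension $g$), I must be careful about what the relative Ext-sheaves of the three terms are. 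The key input is relative duality for the smooth projective morphism $p_2\colon A\times A\to A$ of relative dimension $g$: for a coherent sheaf $\F$ on $A\times A$ one has $R p_{2*} RD(\F) \iso RD(Rp_{2*}(\F\tensor\omega_{p_2}))[g]$, with $\omega_{p_2}\iso\O_{A\times A}$ trivial since $A$ has trivial canonical bundle. Thus the relative Ext-sheaves in question are the duals of the higher direct images of the terms twisted by $\P$-related line bundles, and I can read off $\EExt^g$ as the dual of $R^0 p_{2*}$ of an appropriate twist.

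Concretely, the steps are: (i) identify, via the chain of isomorphisms already carried out in the excerpt, that $\EExt^g_{p_2}(\I_{Y\times A}(m^*\Theta),\O_{A\times A})\tensor\O(\Theta)$ is the $g$'th cohomology sheaf of $(-1)^*R\mathcal{S}RD(\I_Y(\Theta))$; (ii) apply $R\mathcal{S}$ to the defining sequence $0\to\I_Y(\Theta)\to\O_A(\Theta)\to\O_Y(\Theta)\to 0$ after dualizing, using that $\O_A(\Theta)$ satisfies IT$_0$ (indeed $H^p(A,\O_A(\Theta)\tensor\P_a)=H^p(A,\O_A(\Theta_a))=0$ for $p>0$ since $\Theta_a$ is ample) so that $RD(\O_A(\Theta))$ contributes only in a single degree, and $\O_Y$ is supported on the finite scheme $Y$ so its dualizing complex is concentrated appropriately; (iii) chase the resulting long exact sequence of cohomology sheaves to see that $R^g\mathcal{S}RD(\I_Y(\Theta))$ is the cokernel (or kernel) of a map between $R^{g-1}\mathcal{S}$ and $R^g\mathcal{S}$ of the outer terms, and recognise this as a structure sheaf $\O_Z$ for a closed subscheme $Z$; (iv) finally, verify that $Z$ has exactly the universal property defining $T(Y)$, by tracking the construction through a base change $f\colon T\to A$ and comparing with the relative zero scheme description of $T(Y)$ from Proposition \ref{prop:thetadual} and Lemma \ref{lem:zero} — the condition $\Theta_a\supseteq Y$ becoming the vanishing of the relevant map of sheaves on the fibre.

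The main obstacle I anticipate is step (iii)–(iv): showing not merely that $\EExt^g_{p_2}(\I_{Y\times A}(m^*\Theta),\O_{A\times A})$ is \emph{set-theoretically} supported on $T(Y)$ — which is essentially Pareschi–Popa's original observation — but that it is, as a sheaf, \emph{isomorphic to the structure sheaf} $\O_{T(Y)}$ with its scheme structure from Proposition \ref{prop:thetadual}. This requires controlling the scheme structure on the cohomology sheaf, which means I need the map $R^{g-1}\mathcal{S}(D(\O_Y(\Theta)))\to R^g\mathcal{S}(D(\I_Y(\Theta)))$ (or its analogue) to be not just surjective but to have the precise cokernel, and I must check this is compatible with arbitrary base change $T\to A$ — this is where flatness hypotheses and the base-change behaviour of the Fitting versus annihilator support become delicate, and where the machinery of Lemma \ref{lem:zero} earns its keep. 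I expect the cleanest route is to prove both the Pareschi–Popa sheaf and $\O_{T(Y)}$ represent the same functor of points on $\mathrm{Sch}/A$, so that the isomorphism follows by Yoneda, rather than attempting a direct sheaf-theoretic identification fibre by fibre.
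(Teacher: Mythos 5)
Your steps (i)--(iii) are essentially the paper's own computation in different clothing: applying the (right-exact in top degree) relative Ext to $0\to\I_{Y\times A}(m^*\Theta)\to\O_{A\times A}(m^*\Theta)\to\O_{Y\times A}(m^*\Theta)\to 0$ and using relative duality to identify $\EExt^g_{p_2}(\O_{A\times A}(m^*\Theta),\O_{A\times A})$ with $(p_{2*}\O_{A\times A}(m^*\Theta))^\vee\iso\O_A$ exhibits $\EExt^g_{p_2}(\I_{Y\times A}(m^*\Theta),\O_{A\times A})$ as a quotient of $\O_A$, hence as the structure sheaf of \emph{some} closed subscheme $T'(Y)$. (Note that this part needs no hypothesis on $Y$; your step (ii) as written leans on $Y$ being finite so that the dual of $\O_Y(\Theta)$ is concentrated in a single degree, whereas the proposition is stated for an arbitrary closed subscheme and the top-degree right-exactness argument sidesteps this.)

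The genuine gap is in step (iv), which is where the content of the proposition actually lies. You rightly flag that the point is to match the \emph{scheme structure} of $T'(Y)$ with that of $T(Y)$, but the proposed Yoneda argument is circular as stated: any closed subscheme tautologically represents the subfunctor ``factors through me,'' so showing $T'(Y)$ and $T(Y)$ represent the same functor amounts to computing the functor of points of $T'(Y)$ --- and that is exactly what is inaccessible, since $\EExt^g_{p_2}$ does not commute with arbitrary base change and the image/annihilator defining $T'(Y)$ is not a priori a base-change-compatible construction (invoking Lemma \ref{lem:zero} does not by itself bridge this). The missing device is the one the paper uses: choose an effective divisor $D$ on $A$, sufficiently ample and not containing $Y$, and precompose $\EExt^g_{p_2}(\O_{Y\times A}(m^*\Theta),\O_{A\times A})\to\O_A$ with the surjection from $\EExt^g_{p_2}(\O_{Y\times A}(m^*\Theta+p_1^*D),\O_{A\times A})$. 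For $D$ ample enough, relative duality turns this composite into a homomorphism of locally free sheaves, dual to $F\colon p_{2*}\O_{A\times A}(m^*\Theta)\to p_{2*}\O_{Y\times A}(m^*\Theta+p_1^*D)$; its vanishing locus is then the relative zero scheme of the section $d\cdot\vartheta$ of $\O_{Y\times A}(m^*\Theta+p_1^*D)$, where $\vartheta$ cuts out $T(Y)$ and $d$ is nowhere zero on the fibres of $p_2$, whence $Z_{p_2}(d\cdot\vartheta)=Z_{p_2}(\vartheta)=T(Y)$. Without this (or an equivalent) reduction to a map of locally free sheaves whose formation commutes with base change, the identification of scheme structures does not follow from your outline.
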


\begin{proof}
We view $T(Y)$ as the zero locus $Z_{p_2}(\vartheta)$ of the canonical section $\vartheta$ of $\O_{Y\times
A}(m^*\Theta)$, relative to second projection $p_2\colon A\times A\to A$.
This agrees with Proposition \ref{prop:thetadual},
where $T(Y)$ was constructed as the relative zero locus for the inclusion $\I \subset \O_{Y\times A}$ of the ideal
$\I\iso \O_{Y\times A}(-m^*\Theta)$,
corresponding to the subscheme $m^{-1}(\Theta)\cap (Y\times A)$ of $Y\times A$.

Let us temporarily denote the support of $\EExt^g_{p_2}(\I_{Y\times A}(m^*\Theta), \O_{A\times A})$ by $T'(Y)$.
Apply relative Ext with respect to the second projection $p_2\colon A\times A\to A$ to the short exact sequence
\begin{equation*}
0 \to \I_{Y\times A}(m^*\Theta) \to \O_{A\times A}(m^*\Theta) \to \O_{Y\times A}(m^*\Theta) \to 0
\end{equation*}
to obtain the right exact
\begin{equation*}
\begin{split}
\EExt^g_{p_2}(\O_{Y\times A}(m^*\Theta), \O_{A\times A}) \to
&\EExt^g_{p_2}(\O_{A\times A}(m^*\Theta), \O_{A\times A})\\
\to &\EExt^g_{p_2}(\I_{Y\times A}(m^*\Theta), \O_{A\times A}) \to 0.
\end{split}
\end{equation*}
By relative duality, the sheaf in the middle is dual to $p_{2*}\O_{A\times A}(m^*\Theta)$,
which is $H^0(A, \O(\Theta))\tensor_k\O_A \iso \O_A$.
Thus $\EExt^g_{p_2}(\I_{Y\times A}(m^*\Theta), \O_{A\times A})$
is the structure sheaf of $T'(Y)$,
and the leftmost homomorphism has $T'(Y)$ as its vanishing locus.
Now let $D\subset A$ be an effective divisor not containing $Y$.
Then $T'(Y)$ is also the vanishing locus of the composite
\begin{equation*}
\begin{split}
\EExt^g_{p_2}(\O_{Y\times A}(m^*\Theta+p_1^*D), \O_{A\times A}) \onto
&\EExt^g_{p_2}(\O_{Y\times A}(m^*\Theta), \O_{A\times A}) \\
\to &\EExt^g_{p_2}(\O_{A\times A}(m^*\Theta), \O_{A\times A}).
\end{split}
\end{equation*}
If $D$ is sufficiently ample, we may apply relative duality to see
that this composite map is a homomorphism between locally free sheaves, dual to
\begin{equation*}
F\colon p_{2*}\O_{A\times A}(m^*\Theta) \to p_{2*}\O_{Y\times A}(m^*\Theta + p_1^*D).
\end{equation*}
Thus $T'(Y)$ is the vanishing locus of $F$.

The domain of $F$ is isomorphic with $p_{2*}\O_{A\times A}$.
Viewing $F$ as the map
\begin{equation*}
p_{2*}\O_{A\times A} \iso p_{2*}\O_{A\times A}(m^*\Theta) \to p_{2*}\O_{Y\times A}(m^*\Theta + p_1^*D),
\end{equation*}
we find that $T'(Y)$ is, in the language of Lemma \ref{lem:zero},
the zero locus relative to $p_2$ of a section in $H^0(\O_{Y\times A}(m^*\Theta+p_1^*D))$,
which is the product of two sections $\vartheta\in H^0(\O_{Y\times A}(m^*\Theta))$
and $d\in H^0(\O_{Y\times A}(p_1^*D))$.
Now $d$ has been chosen to be nonzero in all fibres of $p_2$,
i.e.~$Z_{p_2}(d)=\emptyset$.
It follows that $T(Y) = Z_{p_2}(\vartheta)$ and $T'(Y) = Z_{p_2}(d\cdot\vartheta)$ coincide.
\end{proof}

\begin{rem}
It is obvious from the universal property of the theta-dual that,
as subschemes of $A$,
we have $Y\subseteq T(T(Y))$ and $T(Y)\subseteq T(Y')$ whenever $Y'\subseteq Y$.
\end{rem}

\begin{ex}\label{ex:T(S)}
Let $S\subset A$ be a nonreduced degree two subscheme supported in a closed point $a$.
Then ``translation along $S$'' defines an infinitesimal deformation of $\Theta_a$,
namely the scheme $m^{-1}\Theta\cap(S\times A)$ as a family over $S$.
The theta-dual $T(S)\subset \Theta_a$ is the vanishing locus of the corresponding section
(defined up to scale, corresponding to the choice of an isomorphism $\Spec k[\epsilon]/(\epsilon^2)\iso S$) 
of the normal bundle $\O_{\Theta_a}(\Theta_a)$.
\end{ex}

\begin{ex}\label{ex:T(W)}
Let $J(C)$ be the Jacobian of a nonsingular projective curve $C$ of genus $g$,
and choose a base point of $C$.
Let $W_i$ be the image of the canonical map $C^{(i)}\to J(C)$ (defined using the base point).
We take $\Theta = W_{g-1}$ as the polarization.
Then it is clear that $W_i \times W_{g-i-1}$ is mapped to $\Theta$ under the group law,
so $W_{g-i-1} \subseteq T(W_i)$.
This inclusion is in fact an equality, as is shown in \cite[Section 8.1]{PPminimal}
(in the reference there is a sign change, which can be traced to the choice of identification
between $J(C)$ and its dual).
\end{ex}

\begin{ex}\label{ex:TS_Jac}
Let $J(C)$ be the Jacobian of a nonsingular projective curve $C$ of genus $g$.
For any two distinct points $p,q$ in $C\hookrightarrow \Pic^1(C)$,
we have the following equality \cite[Lecture IV]{mumford} of sets in $\Pic^{g-1}(C)$
\begin{equation*}
W_{g-1}\cap (W_{g-1})_{q-p}=(W_{g-2})_{-p}\cup (-W_{g-2})_{q-K},
\end{equation*}
where $K$ is a canonical divisor of $C$.
The translations are to be understood inside $\Pic(C)$, so that, e.g.~$(W_{g-2})_{-p} = W_{g-2}+p$ is a subset of
$\Pic^{g-1}(C)$.
For fixed $q$ and generic $p$, both sides of the equality are reduced in any case,
and form a flat family over an open subset of $C$.
Taking flat limits we extend the family to all of $C$.

We conclude that, when $S$ is a subscheme of degree 2 of an Abel-Jacobi curve $C\subset A$,
supported in two possibly coinciding points $a$ and $b$,
there is a schematic equality
\begin{equation*}
T(S) = (W_{g-2})_\alpha \cup (-W_{g-2})_{\beta},
\end{equation*}
with $\alpha$ and $\beta$ depending linearly on $a$ and $b$,
and if the two $\pm W_{g-2}$-translates on the right coincide,
their union is to be understood by perturbing $b$ and taking the flat limit.
Thus $T(S)$ is either the union of two distinct $\pm W_{g-2}$-translates,
or a scheme structure of multiplicity two on a single $W_{g-2}$-translate.
Moreover, the latter happens only in the hyperelliptic case:
it follows from example \ref{ex:T(W)} that $W_{g-2}$ and $-W_{g-2}$ coincide up to translation if and only if $C$ and
$-C$ do, which is equivalent to $C$ being hyperelliptic.
\end{ex}

\begin{ex}
Let $\Gamma\subset A$ be a finite subscheme.
In the short exact sequence
\begin{equation*}
0 \to \I_\Gamma(\Theta) \to \O_A(\Theta) \to \O_\Gamma \to 0,
\end{equation*}
the sheaf $\I_\Gamma(\Theta)$ satisfies WIT$_1$,
whereas the other two sheaves are IT$_0$.
Thus the Fourier-Mukai functor gives a short exact sequence
\begin{equation}\label{eq:theta-dual}
0\to \O_A(-\Theta) \xrightarrow{F} \widehat{\O_\Gamma} \to
\widehat{\I_\Gamma(\Theta)} \to 0.
\end{equation}
Choosing $D=0$ in the proof of Proposition \ref{prop:thetadual},
which is indeed sufficiently ample on the finite scheme $\Gamma$,
we find that the maps named $F$ in that proof and in \eqref{eq:theta-dual}
coincide up to twist by $\Theta$.
Thus the theta-dual $T(\Gamma)$ is precisely the zero locus of $F$ in \eqref{eq:theta-dual}.

Note that the fibre of $F$ over a point $a\in A$ is
\begin{equation*}
F(a)\colon H^0(\O_A(\Theta_a)) \to H^0(\O_{\Gamma}),
\end{equation*}
which vanishes precisely when $\Gamma\subset \Theta_a$.
Thus the zero locus of $F$ is, from the outset,
a natural scheme structure on the set of such points $a$.
The definition of Pareschi--Popa can be seen as a generalization of this observation,
where the lack of base change for $\widehat{\O_Y(\Theta)}$ is handled by working with the dual of $F$ instead.
\end{ex}

\begin{deff}\label{def:T-relative}
If $Y'\subset Y$ is a pair of subschemes of $A$,
we let $T(Y', Y)$ denote the schematic closure of $T(Y')\setminus T(Y)$ in $T(Y')$.
\end{deff}

Next we define theta-genericity:
recall that a finite subscheme $\Gamma$ in $\PP^r$ is in linearly
general position if every subscheme $\Gamma'\subseteq\Gamma$ of
degree $d\le r+1$ spans a linear subspace of dimension $d-1$.
Equivalently, for any pair of $\Gamma''\subset\Gamma'$ of
subschemes of $\Gamma$, such that
\begin{equation*}
\deg \Gamma' - 1 = \deg \Gamma'' \le r,
\end{equation*}
there exists a hyperplane containing $\Gamma''$ but not
$\Gamma'$. Phrased in this way, the condition of linear
independence can be carried over to $(A,\Theta)$, with
$\Theta$-translates replacing hyperplanes.

\begin{deff}[analogue of Definition 3.2 in \cite{PPschottky}]
A finite subscheme $\Gamma$ is \emph{theta-general} if, for every
pair $\Gamma''\subset\Gamma'$ of subschemes of $\Gamma$
satisfying
\begin{equation*}
\deg \Gamma' - 1 = \deg\Gamma'' \le g,
\end{equation*}
there exists a $\Theta$-translate containing $\Gamma''$ but not
$\Gamma'$.
\end{deff}

\begin{rem}[correction to Remark 3.5 in \cite{PPschottky}]
The condition in the definition demands that the inclusion
$T(\Gamma')\subset T(\Gamma'')$ is \emph{set theoretically} strict,
and not just scheme theoretically.
As an example, consider the surface case $g=2$.
There exist distinct points $a\ne b$ in $A$ such that $T(\set{a,b}) = \Theta_a\cap\Theta_b$
is a degree $2$ subscheme supported in a single point $x$.
Thus there exists a \emph{unique} theta-translate $\Theta_x$ containing $\set{a,b}$.
Let $c$ be a third point on $\Theta_x$, then $\set{a,b,c}$ is not theta-general:
the inclusion $T(\set{a,b,c})\subset T(\set{a,b})$ of schemes is strict,
but it is an equality of sets.
This might suggest that it is more natural to work with the weaker notion of theta-generality
given by demanding that $T(\Gamma')\subset T(\Gamma'')$ is a strict inclusion of schemes.
We will however continue to use the stronger, set theoretic, notion here.
\end{rem}

\subsection{Dependence loci}

Let $D\subset A$ be an ample divisor. In later sections, $D$ will
be taken to be $2\Theta$. We are concerned with the \emph{number
of independent conditions} imposed by a finite subscheme $\Gamma$ on the
linear system $D$. By this we mean the
codimension of $H^0(A,\I_\Gamma(D))$ in $H^0(A,\O_A(D))$.  As long as
$\deg \Gamma \le \dim H^0(A, \O_A(D))$, the
expected number of conditions imposed is the degree of $\Gamma$.
Since $D$ is ample, its higher cohomology spaces vanish, so there is
an exact sequence
\begin{equation*}
0 \to H^0(A, \I_\Gamma(D))
  \to H^0(A, \O_A(D))
  \to H^0(A, \O_\Gamma)
  \to H^1(A, \I_\Gamma(D))
  \to 0
\end{equation*}
which shows that $H^1(A, \I_\Gamma(D))$ measures the
failure of $\Gamma$ to impose $\deg \Gamma$ independent conditions.

We will in fact study the number of independent conditions imposed by
$\Gamma$ on all the linear systems associated to $H^0(A,
\O_A(D)\otimes\P_a)$ for $a\in A$. Since $D$ is ample,
the collection of these linear systems coincides with the collection of the translated systems $|D_a|$.

\begin{deff}
The \emph{superabundance} of a finite subscheme $\Gamma\subset A$ with
respect to $D$ is the value
\begin{equation*}
\omega(\Gamma, D) = \dim H^1(A, \I_\Gamma(D)\otimes\P_a)
\end{equation*}
for general $a\in A$. Equivalently, it is the minimal
value of the right hand side, over $a\in A$. The
subscheme $\Gamma$ is \emph{superabundant} if its superabundance is
nonzero.
\end{deff}

\begin{rem}
We deviate slightly from the literature (e.g.~Griffiths--Harris \cite{GH}), where superabundance
$\omega(\Gamma, D)$ is defined without the twist by a generic $\P_a$, but otherwise as above.
\end{rem}

It is also useful to study the locus of points
$a\in A$ such that $\Gamma$ does not impose
independent conditions on $H^0(A,\O_A(D)\otimes\P_a)$.

\begin{deff}
The \emph{dependence locus} $\Delta(\Gamma,D)$ is the Fitting support of
\begin{equation*}
R^1\mathcal{S}(\I_\Gamma(D)).
\end{equation*}
\end{deff}

\begin{rem}
Note that $R^i\mathcal{S}(\I_\Gamma(D))$ vanish for
all $i>1$. Hence, by base change, the fibre of
$R^1\mathcal{S}(\I_\Gamma(D))$ at $a$ is
\begin{equation*}
H^1(A, \I_\Gamma(D)\otimes\P_a)
\end{equation*}
which is nonzero precisely when $\Gamma$ fails to impose
independent conditions on the linear system associated to $\O_A(D)\otimes\P_a$.
\end{rem}

Let $\Gamma'\subset \Gamma$ be a pair of finite subschemes.
There is an exact sequence
\begin{equation*}
0 \to \I_{\Gamma}(D)
\to \I_{\Gamma'}(D)
\to \I_{\Gamma'/\Gamma}
\to 0.
\end{equation*}
Applying the Fourier-Mukai functor, we obtain a right exact sequence
\begin{equation}\label{eq:phi-sequence}
\widehat{\I_{\Gamma'/\Gamma}}
\to  R^1\mathcal{S}(\I_{\Gamma}(D))
\xrightarrow{\phi} R^1\mathcal{S}(\I_{\Gamma'}(D)) \to 0.
\end{equation}

\begin{deff}
Given a pair $\Gamma'\subset\Gamma$ of finite
subschemes, their \emph{relative dependence locus} is
the Fitting support
\begin{equation*}
\Delta(\Gamma',\Gamma,D) = \Fitt(\Ker (\phi)),
\end{equation*}
where $\phi$ is the map in \eqref{eq:phi-sequence}.
\end{deff}

\begin{rem}
The (underlying set of the) dependence locus $\Delta(\Gamma, D)$ is denoted $V(\I_{\Gamma}(D))$ in \cite[Definition
3.9]{PPschottky}.
Our relative dependence locus $\Delta(\Gamma',\Gamma)$ plays a similar role to the set $B(\I_{\Gamma'}(D),p)$ in
\cite[Definition 3.10]{PPschottky}, when $\Gamma = \Gamma' \cup \set{p}$, although they are not identical.
\end{rem}

\begin{lem}\label{lem:triple}
There are inclusions
\begin{equation*}
\Delta(\Gamma',D)\subseteq
\Delta(\Gamma,D) \subseteq
\Delta(\Gamma',D) \cup
\Delta(\Gamma',\Gamma,D)
\end{equation*}
where the union is defined by the \emph{product} of the corresponding ideals.
\end{lem}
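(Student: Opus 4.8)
The plan is to work purely with the Fitting-support definitions and the right-exact sequence \eqref{eq:phi-sequence}, reducing everything to a short exact sequence of coherent sheaves on $A$ and the standard behaviour of Fitting ideals in such sequences. Write $\mathcal{R} = R^1\mathcal{S}(\I_{\Gamma}(D))$, $\mathcal{R}' = R^1\mathcal{S}(\I_{\Gamma'}(D))$, and $\mathcal{K} = \Ker(\phi)$, so that by definition $\Delta(\Gamma,D) = \Fitt\mathcal{R}$, $\Delta(\Gamma',D) = \Fitt\mathcal{R}'$, $\Delta(\Gamma',\Gamma,D) = \Fitt\mathcal{K}$, and there is a short exact sequence
\begin{equation*}
0 \to \mathcal{K} \to \mathcal{R} \xrightarrow{\phi} \mathcal{R}' \to 0.
\end{equation*}
The first inclusion $\Delta(\Gamma',D)\subseteq\Delta(\Gamma,D)$ should follow from the fact that $\phi$ is surjective: if $\mathcal{R}\onto\mathcal{R}'$ then locally a presentation of $\mathcal{R}$ maps onto a presentation of $\mathcal{R}'$, so $\Fitt\mathcal{R}\subseteq\Fitt\mathcal{R}'$ as ideals — hence the reverse inclusion of Fitting supports as subschemes. (One must be careful with the direction: for a surjection $\mathcal{M}\onto\mathcal{N}$ one has $\Fitt_0\mathcal{M}\subseteq\Fitt_0\mathcal{N}$, so the closed subscheme cut out by $\Fitt\mathcal{R}$ contains the one cut out by $\Fitt\mathcal{R}'$.) For the second inclusion, the relevant fact is the standard multiplicativity of the zeroth Fitting ideal in a short exact sequence: $\Fitt_0\mathcal{R} \supseteq \Fitt_0\mathcal{K}\cdot\Fitt_0\mathcal{R}'$. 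Translating to subschemes, the scheme cut out by the product ideal $\Fitt\mathcal{K}\cdot\Fitt\mathcal{R}'$ contains the scheme cut out by $\Fitt\mathcal{R}$, i.e. $\Delta(\Gamma,D)\subseteq \Delta(\Gamma',D)\cup\Delta(\Gamma',\Gamma,D)$ with the union taken as the product of ideals, exactly as stated.

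Concretely, I would carry out the following steps. First, fix an affine open $U = \Spec B$ of $A$ and choose finite free presentations $B^{a}\xrightarrow{\alpha} B^{b}\to \mathcal{R}'|_U\to 0$ and, compatibly, a presentation of $\mathcal{R}|_U$. Since $\phi$ is surjective, one can lift a generating set of $\mathcal{R}$ to one of $\mathcal{R}'$; combining this with a presentation of $\mathcal{K}$ via the horseshoe lemma yields a block-triangular presentation matrix for $\mathcal{R}|_U$ whose diagonal blocks present $\mathcal{K}|_U$ and $\mathcal{R}'|_U$. The determinantal ideal inclusions $\Fitt_0(\mathcal{K})\Fitt_0(\mathcal{R}') \subseteq \Fitt_0(\mathcal{R}) \subseteq \Fitt_0(\mathcal{R}')$ then drop out of elementary matrix algebra (expansion of minors of a block-triangular matrix). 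Second, I would note that these inclusions are independent of the chosen presentations and hence globalize, giving inclusions of Fitting ideal sheaves on all of $A$. Third, I would translate the ideal inclusions into the asserted inclusions of closed subschemes, being careful that a larger ideal gives a smaller (closed) subscheme, and recording that $V(\Fitt_0\mathcal{K}\cdot\Fitt_0\mathcal{R}')$ is by definition the union $\Delta(\Gamma',D)\cup\Delta(\Gamma',\Gamma,D)$ taken with the product-of-ideals scheme structure.

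The main obstacle I anticipate is purely bookkeeping: making sure the direction of every inclusion is correct under the contravariant passage from ideals to subschemes, and handling the zeroth Fitting ideal correctly when the sheaves involved are not of locally constant rank (so that $\Fitt_0$, rather than a higher Fitting ideal, is the right invariant — this is exactly why the definitions in the paper use the Fitting support). A secondary point to verify is that the sequence \eqref{eq:phi-sequence} is genuinely a short exact sequence of \emph{coherent} sheaves on $A$: coherence of $\mathcal{R}$ and $\mathcal{R}'$ follows since $\mathcal{S}$ and its derived functors preserve coherence on the noetherian scheme $A$, and $\mathcal{K}$ is then coherent as a kernel. With those checks in place the lemma follows formally from the block-triangular presentation, and there is no need to invoke base change or anything about the geometry of $\Gamma$ beyond what \eqref{eq:phi-sequence} already encodes.
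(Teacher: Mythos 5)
Your proof is correct and is essentially the argument the paper intends: the paper's one-line proof ("this follows from \eqref{eq:phi-sequence}") is precisely the observation that $\phi$ is surjective with kernel $\mathcal{K}$, combined with the standard Fitting-ideal facts $\Fitt_0(\mathcal{R})\subseteq\Fitt_0(\mathcal{R}')$ for a surjection and $\Fitt_0(\mathcal{K})\cdot\Fitt_0(\mathcal{R}')\subseteq\Fitt_0(\mathcal{R})$ for a short exact sequence, which you verify via the block-triangular presentation. Your bookkeeping of the contravariant passage from ideals to closed subschemes is also correct.
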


\begin{proof}
This follows from \eqref{eq:phi-sequence}.
\end{proof}

%%% advanced version of lem:triple (no longer needed):
%
%\begin{lem}\label{lem:triple}
%Let $\Gamma''\subset\Gamma'\subset\Gamma$ be a triple
%of finite subschemes. Then there are inclusions
%\begin{equation*}
%\Delta(\Gamma'',\Gamma',D)\subseteq
%\Delta(\Gamma'',\Gamma,D)\subseteq
%\Delta(\Gamma'',\Gamma',D)\cup
%\Delta(\Gamma',\Gamma,D),
%\end{equation*}
%where the union is defined by the \emph{product} of the
%corresponding ideals.
%In particular, in the case $\Gamma''=\emptyset$, we
%have
%\begin{equation*}
%\Delta(\Gamma',D)\subseteq
%\Delta(\Gamma,D) \subseteq
%\Delta(\Gamma',D) \cup
%\Delta(\Gamma',\Gamma,D).
%\end{equation*}
%\end{lem}
%
%\begin{proof}
%With notation similar to \eqref{eq:phi-sequence},
%there are compatible surjections
%\begin{equation*}
%\xymatrix{
%R^1\mathcal{S}(\I_{\Gamma}(D)) \ar@(dr,dl)[rr]_{\phi_0} \ar[r]^{\phi_1} & R^1\mathcal{S}(\I_{\Gamma'}(D))
%\ar[r]^{\phi_2} & R^1\mathcal{S}(\I_{\Gamma''}(D)),
%}
%\end{equation*}
%giving rise to a short exact sequence
%\begin{equation*}
%0 \to \Ker(\phi_1) \to \Ker(\phi_0) \to \Ker(\phi_2) \to 0.
%\end{equation*}
%The relative dependence loci are the supports of these three sheaves,
%so the inclusions follows.
%\end{proof}

\subsection{Residual subschemes}

Following Eisenbud--Green--Harris \cite{EGH},
we define a scheme theoretic version of ``complement'' as follows:

\begin{deff}
Let $\Gamma$ be a finite scheme and $\Gamma'\subset\Gamma$ a
subscheme. The \emph{residual subscheme} of $\Gamma'$ in $\Gamma$
is the support
\begin{equation*}
\Gamma'' = \supp \I_{\Gamma'/\Gamma}
\end{equation*}
of the ideal of $\Gamma'$ in $\Gamma$. If
$\I_{\Gamma'/\Gamma}$ is a principal ideal,
then we say that $\Gamma''$ is \emph{well formed}.
\end{deff}

\begin{rem}\label{rem:wf}
When the residual subscheme is well formed, we may
identify $\I_{\Gamma'/\Gamma}$ with the
structure sheaf $\O_{\Gamma''}$, so there is
a short exact sequence
\begin{equation*}
0 \to \O_{\Gamma''} \to \O_{\Gamma} \to
\O_{\Gamma'} \to 0.
\end{equation*}
In particular, there is an equality
\begin{equation}\label{eq:residualcycles}
[\Gamma] = [\Gamma'] + [\Gamma'']
\end{equation}
between the underlying zero-cycles.
\end{rem}

\begin{rem}\label{rem:resunion}
Let the union $\Gamma'\cup\Gamma''$ denote the
subscheme, inside some ambient scheme, defined by the product of the
corresponding ideals. Then it is immediate from the
definition of the residual subscheme (not
necessarily well formed) that
\begin{equation*}
\Gamma \subset \Gamma'\cup\Gamma''.
\end{equation*}
In particular, if $D$ is an effective divisor containing
$\Gamma'$, then there is an inclusion of ideals
\begin{equation*}
\I_{\Gamma''}(-D) \subset \I_\Gamma.
\end{equation*}
\end{rem}

\begin{ex}\label{ex:respoint}
If $\Gamma'$ has degree $\deg \Gamma - 1$, then the ideal
$\I_{\Gamma'/\Gamma}$ is isomorphic to the residue field
$k(x)$ of the unique closed point $x$ where $\Gamma'$ and $\Gamma$
differ. Thus $x$ is the \emph{residual point} of $\Gamma'$ in
$\Gamma$, and it is well formed.
\end{ex}

\begin{deff}[Le Barz \cite{lebarz}]
A finite subscheme $Z$ of a variety $X$ is \emph{curvilinear}
if there is a reduced curve $C\subset X$ containing $Z$,
and such that $C$ is smooth along the support of $Z$.
\end{deff}

\begin{ex}\label{ex:rescurvi}
If $\Gamma$ is curvilinear, and $\Gamma'\subset\Gamma$ is
arbitrary, then the residual subscheme $\Gamma''$ is well formed. In
fact, it is uniquely determined by \eqref{eq:residualcycles}.
\end{ex}

\begin{ex}
If $\Gamma = \Spec k[x,y]/(x^2,xy,y^2)$ and $\Gamma'$ is the
origin, then it is clear that the data given does not
distinguish any degree $2$ subscheme of $\Gamma$. Indeed, the
residual subscheme is just the origin again, so it is not well
formed.
\end{ex}

\begin{lem}\label{lem:resgor}
If $\Gamma$ is Gorenstein, and $\Gamma'\subset\Gamma$ has degree
$\deg \Gamma - 2$, then the residual subscheme
$\Gamma''$ of $\Gamma'$ in $\Gamma$ is well formed.
\end{lem}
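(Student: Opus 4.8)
The plan is to localize and reduce to a purely local-algebra statement about Gorenstein Artin local rings. Since $\Gamma$ is finite, it is a disjoint union of its localizations, and the residual subscheme construction is compatible with this decomposition; moreover $\Gamma'' = \supp\I_{\Gamma'/\Gamma}$ is supported only at the points where $\Gamma'$ and $\Gamma$ differ. Since $\deg\Gamma - \deg\Gamma' = 2$, either $\Gamma$ and $\Gamma'$ differ at a single point (where the length drops by $2$) or at two distinct points (where the length drops by $1$ each). In the latter case the residual subscheme is well formed by Example \ref{ex:respoint} applied at each point, so we may assume $\Gamma = \Spec R$ with $(R,\mathfrak m,k)$ an Artinian Gorenstein local $k$-algebra, and that $\I_{\Gamma'/\Gamma}$ is an ideal $I\subset R$ of length $2$ as an $R$-module (equivalently as a $k$-vector space, since $k$ is algebraically closed hence the residue field is $k$). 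We must show $I$ is principal.

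First I would recall the key feature of Gorenstein Artin local rings: the socle $\Soc(R) = (0 :_R \mathfrak m)$ is a one-dimensional $k$-vector space, and more usefully, Matlis duality gives an order-reversing bijection $I \mapsto \Ann_R(I)$ between ideals of $R$ and ideals of $R$, with $\dim_k I + \dim_k \Ann_R(I) = \dim_k R$; here $R$ itself is the injective hull of $k$. An ideal $I$ of length $2$ corresponds under this duality to an ideal $J = \Ann_R(I)$ of colength $2$, i.e.\ $\dim_k R/J = 2$, so $J \supseteq \mathfrak m^2$ and $\dim_k \mathfrak m/J \cdot\text{(as a quotient)} $ forces $\mathfrak m/J$... more precisely $R/J$ has length $2$, hence $R/J \iso k[t]/(t^2)$ and $J$ is generated by $\mathfrak m^2$ together with a codimension-one subspace of $\mathfrak m/\mathfrak m^2$; in particular $J$ is generated by at most... this does not immediately bound generators of $I$. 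So instead I would argue directly on $I$: a length-$2$ ideal $I$ has a filtration $0 \subset \Soc(R)\cap I \subseteq I$. If $\Soc(R)\subseteq I$ then $I/\Soc(R)$ has length $1$, is killed by $\mathfrak m$, hence is itself in the socle of $R/\Soc(R)$; the point is to lift a single generator. If $\mathfrak m I \ne 0$ then $\mathfrak m I = \Soc(R)$ (it has length $\le 1$ and is nonzero) and $I/\mathfrak m I$ has length $1$, so $I$ is principal by Nakayama. If $\mathfrak m I = 0$ then $I\subseteq \Soc(R)$, but $\Soc(R)$ has length $1 < 2$, a contradiction. Hence $I$ is principal, which is exactly the assertion that $\Gamma''$ is well formed.

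The main obstacle I anticipate is not the local algebra above — which is short once set up — but rather making the reduction clean: checking that ``residual subscheme'' genuinely localizes and that it suffices to treat one local component at a time, and handling the bookkeeping of how the length $2$ is distributed among the points of $\supp\Gamma$. One must also be slightly careful that $\deg$ (length) and $\dim_k$ agree, which uses that the ground field is algebraically closed so all residue fields of closed points equal $k$ — this is in force by the standing hypotheses of Section \ref{sec:prelim}. Once the problem is reduced to ``a length-$2$ ideal in an Artinian Gorenstein local ring is principal,'' the Nakayama/socle argument finishes it; alternatively one can quote that $R$ is a Gorenstein Artin local ring, hence its ideals of length $\le 2$ are cyclic because the socle is simple and $\mathfrak m I$ is either $0$ or the socle.
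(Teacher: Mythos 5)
Your proof is correct and takes essentially the same approach as the paper: split into the case of two distinct residual points and the case of a single point, and in the latter reduce to showing that a length-two ideal $I$ in an Artinian Gorenstein local ring is principal, which holds because a non-principal such ideal would satisfy $\mathfrak{m}I=0$ and hence lie in the one-dimensional socle. The Matlis-duality detour you start and abandon is unnecessary, but the Nakayama/socle dichotomy you settle on is exactly the paper's argument.
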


\begin{proof}
If $\Gamma'$ and $\Gamma$ differ at two distinct points $x$ and
$y$, then the ideal of $\Gamma'$ in $\Gamma$ is just $k(x)\oplus
k(y)$ and thus $\Gamma'' = \set{x,y}$.

On the
other hand, if $\Gamma''$ and $\Gamma$ differ at a single point $x$,
then locally at $x$, we have
\begin{equation*}
\Gamma = \Spec(R)
\end{equation*}
for a local Artin Gorenstein ring $R$. The ideal of $\Gamma''$ in $R$
is two dimensional as a vector space. Hence it is either a
principal ideal, or it is generated by two linearly independent elements
of the socle of $R$. The Gorenstein assumption rules out the latter
possibility, so the ideal of $\Gamma''$ in $R$ is
principal.
\end{proof}

\section{Superabundance and dependence loci}\label{sec:key}

From here on, we fix $D=2\Theta$ and abbreviate
$\Delta(\Gamma,2\Theta)$, $\Delta(\Gamma',\Gamma, 2\Theta)$ and
$\omega(\Gamma,2\Theta)$ to
$\Delta(\Gamma)$, $\Delta(\Gamma',\Gamma)$ and
$\omega(\Gamma)$.

\begin{lem}\label{lem:deltatheta}
Let $\Gamma'\subset\Gamma$ be finite subschemes of $A$ of
such that $\deg\Gamma' = \deg\Gamma-1$, and let $a$
denote the residual point. Then we have an inclusion of
schemes
\begin{equation*}
\Delta(\Gamma',\Gamma) \subseteq \Theta_{a-y}
\end{equation*}
for all closed points $y\in T(\Gamma')\setminus T(\Gamma)$.
\end{lem}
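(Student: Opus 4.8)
The plan is to bound $\Delta(\Gamma',\Gamma)$ from above by the Fitting support of an auxiliary sheaf that can be computed explicitly via theta-duality. Since $\deg\Gamma'=\deg\Gamma-1$, Example~\ref{ex:respoint} gives $\I_{\Gamma'/\Gamma}\iso k(a)$, and its Fourier--Mukai transform is the line bundle $\widehat{k(a)}=\P_a$ (the restriction of the Mumford bundle to $\{a\}\times A$). Thus \eqref{eq:phi-sequence} takes the form $\P_a\xrightarrow{\psi}R^1\mathcal{S}(\I_\Gamma(2\Theta))\xrightarrow{\phi}R^1\mathcal{S}(\I_{\Gamma'}(2\Theta))\to0$, so $\Delta(\Gamma',\Gamma)=\Fitt(\Ker\phi)=\Fitt(\Im\psi)$ is the Fitting support of a quotient of the line bundle $\P_a$, hence an effective divisor; moreover $\psi$ is the connecting homomorphism of $0\to\I_\Gamma(2\Theta)\to\I_{\Gamma'}(2\Theta)\xrightarrow{q}k(a)\to0$.

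Now fix a closed point $y\in T(\Gamma')\setminus T(\Gamma)$, so $\Gamma'\subseteq\Theta_y$ but $\Gamma\not\subseteq\Theta_y$. Because $\I_{\Gamma'}/\I_\Gamma$ is a simple module, the only subschemes of $\Gamma$ containing $\Gamma'$ are $\Gamma'$ and $\Gamma$; hence $\Gamma\cap\Theta_y=\Gamma'$, i.e.\ $\I_{\Theta_y}+\I_\Gamma=\I_{\Gamma'}$, and in particular the invertible ideal $\I_{\Theta_y}\subseteq\I_{\Gamma'}$ surjects onto $\I_{\Gamma'}/\I_\Gamma=k(a)$. A local computation at $a$ (using invertibility of $\I_{\Theta_y}$ and that $\mathfrak{m}_a$ kills $k(a)$) identifies the kernel of this surjection with $\I_{\{a\}}\cdot\I_{\Theta_y}$. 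Writing $W=\{a\}\cup\Theta_y$ for the union defined by the product of ideals, we obtain inclusions $\I_W(2\Theta)\subseteq\I_\Gamma(2\Theta)\subseteq\I_{\Gamma'}(2\Theta)$ and $\I_{\Theta_y}(2\Theta)\subseteq\I_{\Gamma'}(2\Theta)$ assembling into a morphism of short exact sequences from $0\to\I_W(2\Theta)\to\I_{\Theta_y}(2\Theta)\xrightarrow{r}k(a)\to0$ to $0\to\I_\Gamma(2\Theta)\to\I_{\Gamma'}(2\Theta)\xrightarrow{q}k(a)\to0$ which is the identity on $k(a)$; here $r$ is surjective (as $\Gamma\not\subseteq\Theta_y$) with $\Ker r=\I_W(2\Theta)$.

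Since $\I_{\Theta_y}(2\Theta)\iso\O_A(2\Theta-\Theta_y)\iso\O_A(\Theta_{-y})$ is IT$_0$, its $R^1\mathcal{S}$ vanishes, so the top connecting map $\delta_r\colon\P_a\to R^1\mathcal{S}(\I_W(2\Theta))$ is surjective; by naturality $\psi=j\circ\delta_r$ with $j$ induced by $\I_W(2\Theta)\subseteq\I_\Gamma(2\Theta)$, so $\Im\psi$ is a quotient of $R^1\mathcal{S}(\I_W(2\Theta))$ and hence $\Delta(\Gamma',\Gamma)\subseteq\Fitt\bigl(R^1\mathcal{S}(\I_W(2\Theta))\bigr)$. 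To finish one computes this Fitting support. As $\I_{\Theta_y}$ is invertible, $\I_W(2\Theta)=\I_{\{a\}}\tensor\I_{\Theta_y}(2\Theta)\iso\I_{\{a\}}(\Theta_{-y})$, and from $0\to\I_{\{a\}}(\Theta_{-y})\to\O_A(\Theta_{-y})\to k(a)\to0$ with $\O_A(\Theta_{-y})$ being IT$_0$ we get that $R^1\mathcal{S}(\I_W(2\Theta))$ is the cokernel of a map $\alpha\colon\widehat{\O_A(\Theta_{-y})}\to\widehat{k(a)}$ of line bundles. By Mukai's computations $\widehat{\O_A(\Theta_{-y})}\iso\O_A(-\Theta_{-y})$ and $\widehat{k(a)}=\P_a$, so $\alpha$ is a global section of $\HHom(\O_A(-\Theta_{-y}),\P_a)\iso\O_A(\Theta_{a-y})$; this space is one-dimensional and $\alpha\ne0$ (as $R^1\mathcal{S}(\I_W(2\Theta))$ is torsion, its fibre at $c$ being $H^1(\I_{\{a\}}(\Theta_{c-y}))=0$ for generic $c$), so the zero scheme of $\alpha$ is the unique member of $|\O_A(\Theta_{a-y})|$, namely $\Theta_{a-y}$ itself; equivalently, up to a line-bundle twist $\alpha$ is the map $F$ of \eqref{eq:theta-dual} for the reduced point $\{a-y\}$, whose zero scheme is $T(\{a-y\})=\Theta_{a-y}$. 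Therefore $R^1\mathcal{S}(\I_W(2\Theta))\iso\P_a\tensor\O_{\Theta_{a-y}}$ and $\Delta(\Gamma',\Gamma)\subseteq\Fitt\bigl(R^1\mathcal{S}(\I_W(2\Theta))\bigr)=\Theta_{a-y}$.

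The delicate points are the scheme-theoretic local bookkeeping that identifies $\Ker r$ with $\I_W(2\Theta)$, so that the factorization of $\psi$ genuinely takes place over $W$, and keeping the final step scheme-theoretic rather than merely set-theoretic: one must pin down the particular translate $\Theta_{a-y}$, which works only because $h^0(\O_A(\Theta_{a-y}))=1$ and because Mukai's translation formulas alter $\alpha$ by at most a line-bundle twist, hence do not move its zero scheme.
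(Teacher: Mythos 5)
Your proof is correct and follows essentially the same route as the paper's: both compare the sequence $0\to\I_\Gamma(2\Theta)\to\I_{\Gamma'}(2\Theta)\to k(a)\to 0$ with $0\to\I_{\{a\}}(\Theta_{-y})\to\O(\Theta_{-y})\to k(a)\to 0$ (your $\I_W(2\Theta)$ is exactly $\I_{\{a\}}(\Theta_{-y})$) and conclude that $\Ker\phi$ is a quotient of $R^1\mathcal{S}(\I_{\{a\}}(\Theta_{-y}))\iso\res{\P_a}{\Theta_{a-y}}$, whose Fitting support is $\Theta_{a-y}$. Your write-up simply makes explicit the paper's ``small calculation'' and the commutativity of the comparison diagram.
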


\begin{proof}
Since $\Theta_y$ contains $\Gamma'$, but not $\Gamma$, we
have a commutative diagram
\begin{equation*}
\xymatrix{
 0 \ar[r] &  \I_{\Gamma}          \ar[r]  & \I_{\Gamma'} \ar[r] & k(a) \ar[r] & 0\\
 0 \ar[r] &  \I_{\set{a}}(-\Theta_y)\ar@{^{(}->}[u] \ar[r] & \O(-\Theta_y) \ar@{^{(}->}[u] \ar[r] & k(a) \ar@{=}[u]
\ar[r] & 0
}
\end{equation*}
with exact rows. Twist with $2\Theta$, use that
$2\Theta-\Theta_y$ is linearly equivalent to $\Theta_{-y}$, and
apply the Fourier-Mukai transform to arrive at the commutative diagram
\begin{equation*}
\xymatrix{
 \P_a \ar@{=}[d]\ar[r] & R^1\mathcal{S}(\I_{\Gamma}(2\Theta)) \ar[r]^{\phi} & R^1\mathcal{S}(\I_{\Gamma'}(2\Theta))
\ar[r]& 0 \\
 \P_a \ar[r]      & \F \ar[u]\ar[r] & 0
}
\end{equation*}
with exact rows, and where $\F$ is the Fourier-Mukai transform
of the WIT$_1$ sheaf $\I_{\set{a}}(\Theta_{-y})$. A small calculation
shows that $\F\cong \res{\P_a}{\Theta_{a-y}}$. By definition,
$\Delta(\Gamma',\Gamma)$ is the support of
$\Ker(\phi)$. Since the kernel of $\phi$ is a quotient of $\F$,
it follows that its support is contained in the support of
$\F$, which gives the claim.
\end{proof}

\begin{lem}[analogue of Lemma 3.13 in \cite{PPschottky}]\label{lem:codim2}
Let $\Gamma$ be a theta-general finite subscheme of $A$ of degree at most $g$.
Then $\Delta(\Gamma)$ has codimension at least $2$.
\end{lem}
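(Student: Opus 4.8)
I would prove Lemma \ref{lem:codim2} by induction on $\deg\Gamma$, using the inclusions of Lemma \ref{lem:triple} to reduce to controlling the relative dependence locus $\Delta(\Gamma',\Gamma)$ for a one-point residual extension. Write $\Gamma = \Gamma' \cup \{a\}$ (residual point $a$), so that Lemma \ref{lem:triple} gives
\begin{equation*}
\Delta(\Gamma',D) \subseteq \Delta(\Gamma,D) \subseteq \Delta(\Gamma',D) \cup \Delta(\Gamma',\Gamma,D).
\end{equation*}
By the inductive hypothesis $\Delta(\Gamma')$ has codimension at least $2$ (the base case $\Gamma'=\emptyset$ being trivial since $\I_\emptyset(2\Theta) = \O_A(2\Theta)$ is $\mathrm{IT}_0$, so $\Delta(\emptyset) = \emptyset$). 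It therefore suffices to show that $\Delta(\Gamma',\Gamma)$ also has codimension at least $2$, since a union of two closed subsets each of codimension $\ge 2$ has codimension $\ge 2$.

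**The key step.** By Lemma \ref{lem:deltatheta}, for every closed point $y \in T(\Gamma')\setminus T(\Gamma)$ we have $\Delta(\Gamma',\Gamma) \subseteq \Theta_{a-y}$, so that
\begin{equation*}
\Delta(\Gamma',\Gamma) \subseteq \bigcap_{y \in T(\Gamma')\setminus T(\Gamma)} \Theta_{a-y}.
\end{equation*}
Since $\deg\Gamma \le g$, theta-generality applied to the pair $\Gamma' \subset \Gamma$ (which satisfies $\deg\Gamma - 1 = \deg\Gamma' \le g$... more precisely the relevant degree bound) guarantees that $T(\Gamma')\setminus T(\Gamma)$ is nonempty, indeed that the inclusion $T(\Gamma)\subset T(\Gamma')$ is set-theoretically strict. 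So pick one such $y$: then $\Delta(\Gamma',\Gamma)$ is contained in the theta-divisor $\Theta_{a-y}$, giving codimension at least $1$ for free. To upgrade to codimension $2$, I would argue that the intersection above cannot be contained in a single $\Theta_{a-y}$ only: if $\Delta(\Gamma',\Gamma)$ contained a divisorial component $Z$, then $Z$ would lie in $\Theta_{a-y}$ for \emph{every} $y$ in the (positive-dimensional, or at least: more than one point) set $T(\Gamma')\setminus T(\Gamma)$; since $\Theta$ is irreducible, $Z$ and $\Theta_{a-y}$ would have to be equal for each such $y$, forcing all these $\Theta_{a-y}$ to coincide, hence $a - y$ constant on $T(\Gamma')\setminus T(\Gamma)$ — contradicting that this set has positive dimension (which follows from $\dim T(\Gamma') \ge g - \deg\Gamma' \ge 1$ by the dimension estimates on theta-duals, together with strictness of $T(\Gamma)\subsetneq T(\Gamma')$).

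**The main obstacle.** The delicate point is making the last dimension count rigorous: one needs that $T(\Gamma')\setminus T(\Gamma)$ is not merely nonempty but genuinely large enough — containing points $y$ with at least two distinct values of $a-y$ — so that no divisor can be trapped inside all the $\Theta_{a-y}$ simultaneously. This requires a lower bound on $\dim T(\Gamma')$ (via a Fourier--Mukai / cohomology-and-base-change argument on $\widehat{\I_{\Gamma'}(\Theta)}$, or by a direct dimension count: $T(\Gamma')$ is cut out by the $\deg\Gamma'$-dimensional condition "$\Gamma' \subset \Theta_a$" inside $A$, hence has dimension $\ge g - \deg\Gamma'$) combined with the theta-generality hypothesis to ensure that removing $T(\Gamma)$ does not drop the dimension. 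I expect the bookkeeping around whether the residual extension is "well formed" (Example \ref{ex:rescurvi}, Lemma \ref{lem:resgor}) and the precise degree inequalities needed to invoke theta-generality at each inductive step to be where the argument needs the most care; the rest is assembling Lemmas \ref{lem:triple}, \ref{lem:deltatheta} and the irreducibility of $\Theta$.
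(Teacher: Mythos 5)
Your proposal is correct and follows essentially the same route as the paper: induct on the degree, use Lemma \ref{lem:triple} to reduce to $\Delta(\Gamma',\Gamma)$, bound $\dim T(\Gamma')\ge g-\deg\Gamma'\ge 1$ since $T(\Gamma')$ is locally cut out by $\deg\Gamma'$ equations, invoke set-theoretic strictness of $T(\Gamma)\subset T(\Gamma')$ from theta-generality to get a positive-dimensional family of points $y$, and conclude codimension $2$ from Lemma \ref{lem:deltatheta} and the distinctness of the translates $\Theta_{a-y}$. The only worry you raise that is a non-issue is well-formedness: a residual subscheme of colength one is always well formed (Example \ref{ex:respoint}), so the residual point $a$ needed for Lemma \ref{lem:deltatheta} exists without further hypotheses.
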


\begin{proof}
Induct on the degree $d$ of $\Gamma$:
let $\Gamma'\subset\Gamma$ be a subscheme of degree $d-1$.
Its theta-dual $T(\Gamma')$ is locally defined by $d-1$ equations,
hence has positive dimension everywhere.
The inclusion $T(\Gamma) \subset T(\Gamma')$ is strict by theta-genericity,
so $T(\Gamma')\setminus T(\Gamma)$ has positive dimension everywhere.
By Lemma \ref{lem:deltatheta}, it follows that $\Delta(\Gamma',\Gamma)$ has
codimension at least $2$.
The inclusion $\Delta(\Gamma) \subseteq \Delta(\Gamma')\cup\Delta(\Gamma',\Gamma)$
from Lemma \ref{lem:triple} concludes the induction.
\end{proof}

\subsection{Superabundant subschemes}\label{sec:super}

It is to be expected that the superabundance $\omega(\Gamma)$ (always with
respect to $2\Theta$) vanishes as long as $\Gamma$ has small degree.
We begin by establishing that the minimal degree of a superabundant
theta-general subscheme is $g+2$.

\begin{prop}
Let $\Gamma\subset A$ be a theta-general finite subscheme of degree
at most $g+1$. Then $\omega(\Gamma)=0$.
\end{prop}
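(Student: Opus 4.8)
The plan is to prove that $\omega(\Gamma) = 0$ by induction on $\deg\Gamma$, essentially showing that the dependence locus $\Delta(\Gamma)$ cannot be all of $A$ when $\deg\Gamma \le g+1$, and then interpreting superabundance via the sheaf $R^1\mathcal{S}(\I_\Gamma(2\Theta))$. Recall that by definition $\omega(\Gamma)$ is the generic value of $\dim H^1(A, \I_\Gamma(2\Theta)\tensor\P_a)$, which by base change is the generic rank of the sheaf $R^1\mathcal{S}(\I_\Gamma(2\Theta))$, whose Fitting support is $\Delta(\Gamma)$. Hence $\omega(\Gamma) = 0$ is equivalent to $\Delta(\Gamma) \ne A$, i.e. $\Delta(\Gamma)$ is a proper closed subscheme. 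So the task reduces to showing that a theta-general finite subscheme of degree $\le g+1$ has proper dependence locus.

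For the base of the induction, I would treat $\deg\Gamma \le g$: by Lemma \ref{lem:codim2}, $\Delta(\Gamma)$ has codimension at least $2$, so in particular it is a proper subscheme of $A$ and $\omega(\Gamma) = 0$. For the inductive step, suppose $\deg\Gamma = g+1$ and write $\Gamma' \subset \Gamma$ for a subscheme of degree $g$, with residual point $a$ (Example \ref{ex:respoint}). The inductive hypothesis (or rather Lemma \ref{lem:codim2}) gives that $\Delta(\Gamma')$ has codimension $\ge 2$. Now Lemma \ref{lem:triple} gives $\Delta(\Gamma) \subseteq \Delta(\Gamma') \cup \Delta(\Gamma',\Gamma)$, so it suffices to show that $\Delta(\Gamma',\Gamma)$ is also a proper closed subscheme of $A$. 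By Lemma \ref{lem:deltatheta}, $\Delta(\Gamma',\Gamma) \subseteq \Theta_{a-y}$ for every closed point $y \in T(\Gamma')\setminus T(\Gamma)$. Since $\Gamma'$ is theta-general of degree $g$, its theta-dual $T(\Gamma')$ is cut out locally by $g$ equations, hence is nonempty of positive dimension everywhere, and the inclusion $T(\Gamma) \subset T(\Gamma')$ is set-theoretically strict by theta-genericity; thus $T(\Gamma')\setminus T(\Gamma)$ is nonempty and we may pick such a $y$. Then $\Delta(\Gamma',\Gamma) \subseteq \Theta_{a-y}$, a theta-translate, which is a proper subscheme of $A$. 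Combining, $\Delta(\Gamma)$ is contained in the (proper) union of a codimension-$2$ scheme and a theta-divisor translate, hence is proper, so $\omega(\Gamma) = 0$.

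The main obstacle I anticipate is the case $\deg\Gamma = g+1$ exactly, where $\Delta(\Gamma)$ could a priori be a divisor: one must be careful that $\Delta(\Gamma)$ being a proper \emph{closed} subscheme genuinely forces the generic rank of $R^1\mathcal{S}(\I_\Gamma(2\Theta))$ to vanish. This is fine because $\Delta(\Gamma)$ is the Fitting support, so off it the sheaf is zero, and by the base change remark preceding the statement, generic vanishing of the stalk is exactly $\omega(\Gamma)=0$. A secondary subtlety is ensuring, in the degree-$g$ case feeding the induction, that $\Gamma'$ inherits theta-genericity from $\Gamma$ — but this is immediate from the definition, since any pair $\Gamma'' \subset \Gamma'''$ of subschemes of $\Gamma'$ is also a pair of subschemes of $\Gamma$. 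One should also note that when $\deg\Gamma \le g$ we may simply invoke Lemma \ref{lem:codim2} directly without induction; the induction is only needed to push from $g$ up to $g+1$ via the residual-point argument above.
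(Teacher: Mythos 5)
Your proof is correct and takes essentially the same route as the paper: both rest on the inclusion $\Delta(\Gamma)\subseteq\Delta(\Gamma')\cup\Delta(\Gamma',\Gamma)$ from Lemma \ref{lem:triple}, the bound $\Delta(\Gamma',\Gamma)\subseteq\Theta_{a-y}$ from Lemma \ref{lem:deltatheta}, and the nonemptiness of $T(\Gamma')\setminus T(\Gamma)$ guaranteed by theta-genericity. The paper phrases this as an induction on the degree while you invoke Lemma \ref{lem:codim2} directly for degrees up to $g$ and perform a single additional step; this is the same argument, and your explicit justification that $\omega(\Gamma)=0$ is equivalent to properness of the Fitting support is a welcome clarification that the paper leaves implicit.
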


\begin{proof}
The claim is that $\Delta(\Gamma)$ has codimension at least one.
As in Lemma \ref{lem:codim2}, induct on the degree $d$ of $\Gamma$:
let $\Gamma'\subset\Gamma$ be a degree $d-1$ subscheme. 
Then $T(\Gamma')\setminus T(\Gamma)$ is nonempty,
so $\Delta(\Gamma',\Gamma)$ has codimension at least one by Lemma \ref{lem:deltatheta}.
The inclusion $\Delta(\Gamma)\subseteq\Delta(\Gamma')\cup\Delta(\Gamma',\Gamma)$
concludes the induction.
\end{proof}

The above bound is sharp: on a Jacobian $A$ there exist
superabundant subschemes of degree $g+2$. In fact, by Riemann-Roch,
an Abel-Jacobi curve $C\subset A$ imposes $g+1$ independent
conditions on $H^0(A, \O_A(2\Theta)\otimes\P_x)$ for any
$x$. Hence a finite subscheme $\Gamma$ of $C$, no matter how
big, cannot impose more than $g+1$ conditions. See Pareschi--Popa
\cite[Example 3.7]{PPschottky} for a more precise statement. Our main
Theorem \ref{thm} says that subschemes of Abel-Jacobi curves are
the only (theta-general)
examples of superabundant subschemes of degree $g+2$.

\begin{cor}\label{cor:cayleybach}
Let $\Gamma\subset A$ be a theta-general, superabundant finite
subscheme of degree $g+2$, and let $\Gamma'\subset\Gamma$ have degree
$g+1$. Then any theta-translate containing $\Gamma'$ also contains
$\Gamma$, i.e.~$T(\Gamma',\Gamma)=\emptyset$.
\end{cor}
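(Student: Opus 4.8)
\section*{Proof proposal}

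The plan is to play the superabundance of $\Gamma$ off against the non-superabundance of $\Gamma'$. Two observations get things started. First, since $\Gamma$ is superabundant, $\dim H^1(A,\I_\Gamma(2\Theta)\otimes\P_a)$ — whose minimum over $a\in A$ is $\omega(\Gamma)>0$ — is nonzero for \emph{every} $a\in A$; hence the fibre of $R^1\mathcal{S}(\I_\Gamma(2\Theta))$ is nonzero at every closed point, so its support, and therefore $\Delta(\Gamma)$, is all of $A$. Second, $\Gamma'\subset\Gamma$ inherits theta-genericity (any pair of subschemes of $\Gamma'$ is a pair of subschemes of $\Gamma$), and since $\deg\Gamma'=g+1$, the preceding Proposition (whose proof in fact shows $\Delta(\Gamma')$ has codimension at least one) tells us that $\Delta(\Gamma')$ is a proper closed subset of $A$.

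Now combine these via Lemma \ref{lem:triple}: $A=\Delta(\Gamma)\subseteq\Delta(\Gamma')\cup\Delta(\Gamma',\Gamma)$ as underlying sets (the fact that this union is defined by a product of ideals is irrelevant here). Since $A$ is irreducible and $\Delta(\Gamma')$ is a proper closed subset, the complement $A\setminus\Delta(\Gamma')$ is dense and is contained in the closed set $\Delta(\Gamma',\Gamma)$; therefore $\Delta(\Gamma',\Gamma)=A$.

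Finally, I would argue by contradiction. Suppose $T(\Gamma')\setminus T(\Gamma)$ contains a closed point $y$. By Example \ref{ex:respoint} the residual subscheme of $\Gamma'$ in $\Gamma$ is a single reduced point $a$, so Lemma \ref{lem:deltatheta} applies and gives $\Delta(\Gamma',\Gamma)\subseteq\Theta_{a-y}$; but $\Theta_{a-y}$ is a divisor, hence a proper subscheme of $A$, contradicting $\Delta(\Gamma',\Gamma)=A$. Hence $T(\Gamma')\setminus T(\Gamma)=\emptyset$, and so its schematic closure $T(\Gamma',\Gamma)$ in $T(\Gamma')$ is empty, which is the assertion. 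I do not expect a real obstacle here; the only things needing care are keeping set-theoretic and scheme-theoretic statements apart throughout the chain of inclusions, and reading ``superabundant'' in the strong form ``$H^1$ fails to vanish for \emph{every} twist $\P_a$'' rather than merely ``for the generic twist''.
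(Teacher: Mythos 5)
Your proof is correct and follows essentially the same route as the paper, which disposes of the corollary in one line by noting that a point of $T(\Gamma')\setminus T(\Gamma)$ would, via Lemma \ref{lem:deltatheta} and the inclusion $\Delta(\Gamma)\subseteq\Delta(\Gamma')\cup\Delta(\Gamma',\Gamma)$ from Lemma \ref{lem:triple}, force $\Delta(\Gamma)\ne A$ and contradict superabundance. Your version merely unpacks this, and your parenthetical worry is unfounded: since $\omega(\Gamma)$ is by definition the \emph{minimal} value of $\dim H^1(A,\I_\Gamma(2\Theta)\otimes\P_a)$, the ``strong form'' of superabundance is automatic.
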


\begin{proof}
By the same argument as in the proof of the proposition, the
existence of a point in $T(\Gamma',\Gamma)$ would imply that
$\Delta(\Gamma)\ne A$, hence $\Gamma$ could not be superabundant.
\end{proof}

\begin{cor}\label{cor:gor}
Let $\Gamma\subset A$ be a theta-general, superabundant finite
subscheme of degree $g+2$. Then $\Gamma$ is Gorenstein, i.e.~each
component of $\Gamma$ is the spectrum of a Gorenstein ring.
\end{cor}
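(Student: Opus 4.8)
The plan is to reduce to the local situation and use the previous corollary to pin down the socle. Since $\Gamma$ is finite, it suffices to show that each local ring $R = \O_{\Gamma,x}$ is Gorenstein, i.e.\ that its socle $\Soc(R) = \Ann_R(\mathfrak{m})$ is one-dimensional over the residue field $k(x)$. Suppose for contradiction that $\dim_k \Soc(R) \ge 2$ at some point $x$. Then I can pick two linearly independent socle elements and use them to produce two distinct subschemes $\Gamma'_1, \Gamma'_2 \subset \Gamma$ of degree $g+1$: concretely, lifting a socle element $s_i \in R$ to a global section supported at $x$ gives an ideal $\I_{\Gamma'_i/\Gamma} = (s_i)$, hence $\O_{\Gamma'_i}$ has colength one in $\O_\Gamma$, so $\Gamma'_i \subset \Gamma$ has degree $g+1$ with residual point $x$ (Example \ref{ex:respoint}). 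The key point is that $\Gamma'_1 \ne \Gamma'_2$ as subschemes, since $(s_1) \ne (s_2)$ inside the socle when $s_1, s_2$ are $k$-linearly independent and both killed by $\mathfrak{m}$.

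Next I would play the two subschemes against theta-genericity and Corollary \ref{cor:cayleybach}. By theta-generality applied to the pair $\Gamma'_i \subset \Gamma$ (note $\deg \Gamma' - 1 = \deg \Gamma'' = g+1 > g$, so this pair is \emph{not} directly covered — I need to instead go down one more step). So refine the argument: choose a degree $g$ subscheme $\Gamma'' \subset \Gamma'_1 \cap \Gamma'_2$. By theta-genericity there is a $\Theta$-translate $\Theta_y$ containing $\Gamma''$ but not $\Gamma'_1$; I want to contradict Corollary \ref{cor:cayleybach}, which says any theta-translate containing a degree $g+1$ subscheme contains all of $\Gamma$. The strategy is: if $\Theta_y \supset \Gamma'_1$ then $\Theta_y \supset \Gamma$, hence $\Theta_y \supset \Gamma'_2$ as well; symmetrically. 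So the set of theta-translates containing $\Gamma'_1$ equals the set containing $\Gamma'_2$ equals the set containing $\Gamma$, i.e.\ $T(\Gamma'_1) = T(\Gamma'_2) = T(\Gamma)$ as sets, and $T(\Gamma'_i, \Gamma) = \emptyset$. I would then argue this forces a contradiction with theta-genericity at the level of the degree $g$ subscheme: since $T(\Gamma'_1) = T(\Gamma)$ set-theoretically, any $\Theta$-translate containing $\Gamma''$ either misses $\Gamma'_1$ (good, gives theta-genericity room) but then also misses $\Gamma$, consistent — so I instead need to exploit that $\Gamma'_1, \Gamma'_2$ have the \emph{same} residual point $x$ but are distinct, which is precisely what a Gorenstein failure allows.

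The cleaner route, which I would ultimately pursue, is to directly apply Lemma \ref{lem:resgor}-type reasoning in reverse via the dependence loci: take $\Gamma' \subset \Gamma$ of degree $g+1$, so $\Gamma' = \Gamma'' \cup \{x\}$ argument, and use that superabundance means $\Delta(\Gamma) = A$, together with Lemma \ref{lem:triple}, $\Delta(\Gamma) \subseteq \Delta(\Gamma') \cup \Delta(\Gamma', \Gamma)$, and Corollary \ref{cor:cayleybach}'s $T(\Gamma',\Gamma) = \emptyset$ to conclude $\Delta(\Gamma',\Gamma)$ has small dimension unless the residual ideal $\I_{\Gamma'/\Gamma}$ is principal. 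The main obstacle, and the step requiring genuine care, is translating ``two independent socle elements'' into ``two distinct degree $g+1$ subschemes with the same residual point'' and then extracting a contradiction: I expect to need that for a fixed residual point $x$, distinct degree $g+1$ subschemes of $\Gamma$ would give, via \eqref{eq:phi-sequence} and Lemma \ref{lem:deltatheta}, a dependence locus contained in \emph{two} distinct theta-translates $\Theta_{a-y}$, forcing $\Delta(\Gamma',\Gamma)$ to have codimension $\ge 2$; combined with $T(\Gamma',\Gamma) = \emptyset$ (Corollary \ref{cor:cayleybach}) and the surjection $\widehat{\I_{\Gamma'/\Gamma}} \to \Ker\phi$, this should contradict $\Delta(\Gamma) = A$. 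Making the dimension count in this last step rigorous — in particular controlling $\widehat{\I_{\Gamma'/\Gamma}}$ when the residual subscheme is not well formed — is where I anticipate the real work lies.
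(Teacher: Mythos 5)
Your setup is the right one, and up to the point where you choose a degree $g$ subscheme $\Gamma''$ inside $\Gamma'_1\cap\Gamma'_2$ and invoke theta-genericity to get $\Theta_y\supseteq\Gamma''$ with $\Theta_y\not\supseteq\Gamma'_1$, you are following the paper's proof almost exactly (the paper works with the ideals $(f,g)\subset(f)$ generated by two independent socle elements, which is your $\Gamma''=\Gamma'_1\cap\Gamma'_2\subset\Gamma'_1$). But you stop just short of the one idea that closes the argument, and you say so yourself. The decisive observation is this: since the ideal $\I_{\Gamma''/\Gamma_0}=(f,g)$ is generated by socle elements, it \emph{equals} the $k$-span of $f$ and $g$ and is therefore contained in $\Soc(R)$; the local equation $\vartheta$ of $\Theta_y$ in $R$ lies in this ideal because $\Theta_y\supseteq\Gamma''$, hence $\vartheta$ is itself a socle element, and it is nonzero because $\Theta_y\not\supseteq\Gamma'_1$. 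A nonzero socle element generates a one-dimensional ideal, so $(\vartheta)$ cuts out a degree $g+1$ subscheme $Z\subset\Gamma$ that is contained in $\Theta_y$ by construction. Corollary \ref{cor:cayleybach} then forces $\Gamma\subseteq\Theta_y$, contradicting $\Gamma'_1\not\subseteq\Theta_y$. Without this step your second paragraph correctly observes that no contradiction has yet been reached, so the proof is incomplete.

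The ``cleaner route'' in your last paragraph does not repair this, and in fact cannot work as described. For any degree $g+1$ subscheme $\Gamma'\subset\Gamma$ of a superabundant $\Gamma$, Corollary \ref{cor:cayleybach} gives $T(\Gamma')\setminus T(\Gamma)=\emptyset$, so Lemma \ref{lem:deltatheta} is vacuous (there are no points $y$ to feed into it) and yields no bound on $\Delta(\Gamma',\Gamma)$; indeed, since $\Delta(\Gamma)=A$ while $\Delta(\Gamma')$ has positive codimension, Lemma \ref{lem:triple} forces $\Delta(\Gamma',\Gamma)=A$ whether or not $\Gamma$ is Gorenstein, so no contradiction can be extracted from a dimension count on that locus. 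Also note that for $\deg\Gamma'=g+1$ the residual ideal $\I_{\Gamma'/\Gamma}$ is always principal (Example \ref{ex:respoint}), so the dichotomy you propose there is empty. The argument has to go through the socle as above.
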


\begin{proof}
Let $\Gamma_0\subset \Gamma$ be a component, so $\Gamma_0=\Spec A$ for
a local Artin ring $A$. We need to show that the socle $\Soc(A)$,
i.e.~the elements in $A$ annihilated by its maximal ideal, is one dimensional
as a vector space. For contradiction, assume $f,g\in \Soc(A)$ are
linearly independent elements. Since the ideal generated by
any collection of socle elements coincides with the vector space they span,
the ideals $(f,g)$ and $(f)$ in $A$ determine subschemes
\begin{equation*}
\Gamma''\subset\Gamma'
\end{equation*}
in $\Gamma$, of degree $g$ and $g+1$, respectively (precisely, $\Gamma'$
is the union of $\Gamma\setminus\Gamma_0$ with the subscheme of
$\Gamma_0$ defined by $(f)$, and similarly for $\Gamma''$). By
theta-genericity, there exists a theta-translate $\Theta_a$ containing
$\Gamma''$ but not $\Gamma'$. Let $\vartheta\in A$ be a local equation
for $\Theta_a$. Then $\vartheta$ is a socle element, since $\vartheta\in(f,g)$,
and hence defines a subscheme $Z\subset\Gamma$ of degree $g+1$. But then
$Z$ is contained in $\Theta_a$, and $\Gamma$ is not, contradicting
the previous corollary.
\end{proof}

\subsection{The key lemma}

\begin{lem}[analogue of Lemma 5.1 in \cite{PPschottky}]\label{lem:key}
Let $\Gamma_g\subset\Gamma_{g+1}\subset\Gamma_{g+2}$ be finite subschemes
of $A$ of degrees indicated by the subscripts,
and assume $\Gamma_{g+2}$ is theta-general and superabundant.
Then the following hold.
\begin{enumerate}
\item[\emph{(i)}] There exists a unique theta-translate $\Theta_x$ containing
$\Gamma_g$ but not $\Gamma_{g+2}$.
\item[\emph{(ii)}] The divisorial part of $\Delta(\Gamma_{g+1})$ is reduced and
equals $\Theta_{b-x}$, where $x$ is as above and $b$ is the residual point of 
$\Gamma_g\subset\Gamma_{g+1}$.
\end{enumerate}
\end{lem}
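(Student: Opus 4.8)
Both parts of the lemma are really about understanding the dependence locus $\Delta(\Gamma_g)$ precisely, so I would organise the proof around that. First, by Lemma \ref{lem:codim2}, $\Delta(\Gamma_g)$ has codimension at least $2$ (here $\deg\Gamma_g = g$), while by Lemma \ref{lem:triple} and superabundance of $\Gamma_{g+2}$ we know $\Delta(\Gamma_{g+2}) = A$; chaining the inclusions $\Delta(\Gamma_g)\subseteq\Delta(\Gamma_{g+1})\subseteq\Delta(\Gamma_g)\cup\Delta(\Gamma_g,\Gamma_{g+1})$ and then once more with $\Gamma_{g+2}$, the only way to get from codimension $\ge 2$ up to all of $A$ is for the relative dependence loci $\Delta(\Gamma_g,\Gamma_{g+1})$ and $\Delta(\Gamma_{g+1},\Gamma_{g+2})$ to contribute divisorial components. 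By Corollary \ref{cor:cayleybach}, $T(\Gamma_{g+1},\Gamma_{g+2})=\emptyset$, i.e.\ every $\Theta$-translate through $\Gamma_{g+1}$ contains $\Gamma_{g+2}$; combined with Lemma \ref{lem:deltatheta} this forces $\Delta(\Gamma_{g+1},\Gamma_{g+2})$ to be supported on $\Theta_{c-y}$ for $y\in T(\Gamma_{g+1})\setminus T(\Gamma_{g+2})$ — but in fact a dimension count using theta-genericity should pin it down more tightly and push the real content onto $\Delta(\Gamma_g,\Gamma_{g+1})$.

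For part (i): existence of a theta-translate $\Theta_x\supseteq\Gamma_g$ not containing $\Gamma_{g+2}$ is exactly the statement that $T(\Gamma_g)\setminus T(\Gamma_{g+2})\ne\emptyset$, which is clear since $\Gamma_g$ is theta-general (so $T(\Gamma_{g+2})\subsetneq T(\Gamma_g)$ strictly as sets, by iterating theta-genericity down from degree $g$). The point is \emph{uniqueness}. I would argue this through the divisor $\Theta_{b-x}$ appearing in $\Delta(\Gamma',\Gamma_{g+1})$ via Lemma \ref{lem:deltatheta}: if $y,y'\in T(\Gamma_g)\setminus T(\Gamma_{g+1})$ were two distinct points, we would get $\Delta(\Gamma_g,\Gamma_{g+1})\subseteq\Theta_{b-y}\cap\Theta_{b-y'}$, which has codimension $2$; running this through the inclusions above, the total $\Delta(\Gamma_{g+2})$ could then not be all of $A$ unless the $\Gamma_{g+1}\to\Gamma_{g+2}$ step makes up a full divisor's worth — but then by symmetry (choosing the flag differently, or by Corollary \ref{cor:cayleybach}) one runs into a contradiction with superabundance. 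So $T(\Gamma_g)\setminus T(\Gamma_{g+1})$, and hence (after checking $T(\Gamma_{g+1})\subseteq T(\Gamma_{g+2})$ has no extra points in the relevant range) $T(\Gamma_g)\setminus T(\Gamma_{g+2})$, is a single reduced point $x$.

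For part (ii): once we know $T(\Gamma_g)\setminus T(\Gamma_{g+1}) = \{x\}$ (a reduced point), Lemma \ref{lem:deltatheta} gives $\Delta(\Gamma_g,\Gamma_{g+1})\subseteq\Theta_{b-x}$. The reverse inclusion, and reducedness, I would get by analysing the map $\phi$ in \eqref{eq:phi-sequence} directly: as in the proof of Lemma \ref{lem:deltatheta}, $\Ker\phi$ is a quotient of $\widehat{\I_{\{a\}}(\Theta_{-x})}\cong\res{\P_a}{\Theta_{b-x}}$ (with $a=b$ the residual point), which is a line bundle on the \emph{reduced, irreducible} divisor $\Theta_{b-x}$; so any nonzero quotient supported in codimension $1$ must be all of it, and superabundance forces $\Ker\phi\ne 0$ in codimension $1$. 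Finally, to conclude that the divisorial part of $\Delta(\Gamma_{g+1})$ itself (not just the relative locus) equals $\Theta_{b-x}$ with the reduced structure, I would use Lemma \ref{lem:triple}: $\Delta(\Gamma_{g+1})\subseteq\Delta(\Gamma_g)\cup\Delta(\Gamma_g,\Gamma_{g+1})$ with $\Delta(\Gamma_g)$ of codimension $\ge 2$, so the divisorial part of $\Delta(\Gamma_{g+1})$ is contained in $\Theta_{b-x}$; and the other inclusion $\Delta(\Gamma_g,\Gamma_{g+1})\subseteq\Delta(\Gamma_{g+1})$ from the same lemma gives equality of divisorial parts, with the reduced structure inherited from $\Theta_{b-x}$.

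\textbf{Main obstacle.} The delicate point is the uniqueness in (i): turning ``$T(\Gamma_g)\setminus T(\Gamma_{g+2})$ is finite'' (which follows from codimension counts) into ``it is a single reduced point''. The codimension bookkeeping via Lemmas \ref{lem:triple}, \ref{lem:codim2} and \ref{lem:deltatheta} tells us the relative loci must carry divisors, but extracting from this that exactly \emph{one} point $y$ can occur — rather than, say, two points each contributing part of a divisor, or one point with a nonreduced theta-dual — requires the full force of set-theoretic theta-genericity together with a careful comparison of the two flag steps $\Gamma_g\subset\Gamma_{g+1}$ and $\Gamma_{g+1}\subset\Gamma_{g+2}$, exploiting Corollary \ref{cor:cayleybach}. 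This is also where the nonreduced case genuinely differs from Pareschi--Popa's: one must make sure $\Theta_x$ is the unique theta-translate \emph{scheme-theoretically} containing $\Gamma_g$ away from $\Gamma_{g+2}$, and that the residual-point structure in \eqref{eq:phi-sequence} is being tracked correctly through base change (this is exactly why the Fitting support, rather than the annihilator, is used in the definition of the dependence loci).
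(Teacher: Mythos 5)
Your upper bound is fine and is exactly the paper's: $\Delta(\Gamma_{g+1})\subseteq\Delta(\Gamma_g)\cup\Delta(\Gamma_g,\Gamma_{g+1})\subseteq\Delta(\Gamma_g)\cup\Theta_{b-x}$ by Lemmas \ref{lem:triple} and \ref{lem:deltatheta}, with $\Delta(\Gamma_g)$ of codimension $\ge 2$ by Lemma \ref{lem:codim2}. The genuine gap is the \emph{lower} bound $\Theta_{b-x}\subseteq\Delta(\Gamma_{g+1})$ --- your phrase ``superabundance forces $\Ker\phi\ne 0$ in codimension $1$'' --- and the codimension bookkeeping you propose cannot deliver it. The chain $\Delta(\Gamma_{g+2})\subseteq\Delta(\Gamma_{g+1})\cup\Delta(\Gamma_{g+1},\Gamma_{g+2})$ puts no constraint whatsoever on $\Delta(\Gamma_{g+1})$, because $\Delta(\Gamma_{g+1},\Gamma_{g+2})$ is automatically all of $A$: the kernel of $R^1\mathcal{S}(\I_{\Gamma_{g+2}}(2\Theta))\to R^1\mathcal{S}(\I_{\Gamma_{g+1}}(2\Theta))$ agrees generically with the source (the source is supported everywhere by superabundance, the target is torsion since $\omega(\Gamma_{g+1})=0$), so its Fitting support is $A$. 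Superabundance of $\Gamma_{g+2}$ is thus entirely ``absorbed'' by the last step of the flag, consistently with $\Delta(\Gamma_{g+1})$ having codimension $2$. For the same reason your uniqueness argument in (i) collapses: two points $y\ne y'$ in $T(\Gamma_g)\setminus T(\Gamma_{g+1})$ would indeed push $\Delta(\Gamma_{g+1})$ into codimension $2$, but that does not contradict $\Delta(\Gamma_{g+2})=A$. (Note also that Corollary \ref{cor:cayleybach} makes Lemma \ref{lem:deltatheta} vacuous for the pair $\Gamma_{g+1}\subset\Gamma_{g+2}$, since $T(\Gamma_{g+1})\setminus T(\Gamma_{g+2})=\emptyset$.)

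The missing mechanism in the paper couples the two steps of the flag instead of treating them separately. One takes the degree-two residual scheme $S$ of $\Gamma_g$ in $\Gamma_{g+2}$ (well formed by Corollary \ref{cor:gor} and Lemma \ref{lem:resgor}), uses Corollary \ref{cor:cayleybach} to see that $\Gamma_{g+2}\cap\Theta_x=\Gamma_g$, and builds a $3\times 3$ diagram whose Fourier--Mukai transform produces a map $\mu\colon\I_{T(S)_{-x}}(\Theta_{a+b-x})\to R^1\mathcal{S}(\I_{\Gamma_{g+2}}(2\Theta))$ inducing on quotients the map $\nu\colon\res{\P_b}{\Theta_{b-x}}\to R^1\mathcal{S}(\I_{\Gamma_{g+1}}(2\Theta))$ whose image is $\Ker\phi$. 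Superabundance shows $\mu\ne 0$ (otherwise $R^1\mathcal{S}(\I_{\Gamma_{g+2}}(2\Theta))$ would surject isomorphically onto the torsion sheaf $R^1\mathcal{S}(\I_{\Gamma_{g+1}}(2\Theta))$), and --- this is the step your sketch has no substitute for --- the domain of $\mu$ is torsion free of rank one (because $T(S)$ has codimension two), which upgrades ``nonzero'' to ``injective'' and hence, by a diagram chase, gives injectivity of $\nu$. That injectivity is exactly the inclusion $\Theta_{b-x}\subseteq\Delta(\Gamma_{g+1})$, from which the reducedness in (ii) and, since $\Delta(\Gamma_{g+1})$ does not depend on the choice of $x$, the uniqueness in (i) both follow as you anticipated.
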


\begin{rem}\label{rem:translates}
By Corollary \ref{cor:cayleybach}, the theta-translate
$\Theta_x$ in part (i) cannot contain $\Gamma_{g+1}$. Thus $\Theta_x$
is also the unique theta-translate containing $\Gamma_{g}$ but not
$\Gamma_{g+1}$.
\end{rem}

Before proving the Lemma, we explain a consequence that will be important
for proving the Castelnuovo part of Theorem \ref{thm}.

\begin{cor}\label{cor:curv}
Let $\Gamma\subset A$ be a theta-general, superabundant finite
subscheme of degree $g+2$. Then $\Gamma$ is curvilinear.
\end{cor}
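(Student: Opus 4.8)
The plan is to show that $\Gamma$ is contained in a curve that is smooth along $\supp\Gamma$, by exhibiting, at each point $x_0$ of the support, a theta-translate $\Theta_a$ which is \emph{smooth} at $x_0$ and whose intersection with a second, transverse theta-translate is curvilinear at $x_0$. The key input is Lemma \ref{lem:key}: since a curvilinear scheme is one whose local rings are quotients of $k[[t]]$, equivalently whose Zariski tangent spaces are at most one-dimensional at every point, it suffices to bound $\dim\mathfrak m/\mathfrak m^2$ by $1$ for the local ring of $\Gamma$ at each point $x_0$ of its support.

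First I would reduce to the local problem. Write $\Gamma_0 = \Spec R$ for the component of $\Gamma$ supported at $x_0$, with $R$ local Artinian Gorenstein (Corollary \ref{cor:gor}). Suppose for contradiction that $\dim_k \mathfrak m_R/\mathfrak m_R^2 \ge 2$, i.e.\ $\Gamma_0$ is \emph{not} curvilinear. Then I would produce a flag $\Gamma_g\subset\Gamma_{g+1}\subset\Gamma$ as in Lemma \ref{lem:key}, chosen so that $\Gamma_{g+1}$ and $\Gamma_g$ both differ from $\Gamma$ only inside $\Gamma_0$, with residual points forced to lie at $x_0$; concretely, pick a two-dimensional subspace of the socle-or-near-socle structure so that the residual point $b$ of $\Gamma_g\subset\Gamma_{g+1}$ is $x_0$, and likewise arrange the residual point $p$ of $\Gamma_{g+1}\subset\Gamma$ to be $x_0$.

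Next comes the heart of the argument, using part (ii) of Lemma \ref{lem:key}. The divisorial part of $\Delta(\Gamma_{g+1})$ is the reduced translate $\Theta_{b-x} = \Theta_{x_0 - x}$, with $x$ the unique point of part (i). On the other hand, by Lemma \ref{lem:deltatheta} applied to the pair $\Gamma_{g+1}\subset\Gamma$ (residual point $p = x_0$), for every closed point $y\in T(\Gamma_{g+1})\setminus T(\Gamma)$ we get $\Delta(\Gamma_{g+1},\Gamma)\subseteq\Theta_{x_0 - y}$; combined with Lemma \ref{lem:triple} and superabundance of $\Gamma$ (which forces $\Delta(\Gamma)=A$, so the right-hand union must be all of $A$, hence $\Delta(\Gamma_{g+1},\Gamma)$ is a divisor), one sees that $\Delta(\Gamma_{g+1},\Gamma)$ is itself, up to nilpotents, a union of translates $\Theta_{x_0-y}$. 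Varying $y$ over the positive-dimensional set $T(\Gamma_{g+1})\setminus T(\Gamma)$, the divisor $\Delta(\Gamma_{g+1})$, whose divisorial part is the \emph{single} reduced translate $\Theta_{x_0-x}$, cannot absorb a positive-dimensional family of distinct translates unless that family degenerates — which pins $y$ and produces a trisecant-type rigidity. The upshot is a numerical contradiction with $\dim_k\mathfrak m_R/\mathfrak m_R^2\ge 2$: the non-curvilinearity forces the relevant section of $\O_{\Theta_a}(\Theta_a)$ (as in Example \ref{ex:T(S)}) to vanish to higher order than the geometry of $\Delta(\Gamma_{g+1})$ allows.

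The main obstacle, as I see it, is controlling precisely how a degree-two, non-curvilinear ``fat point'' inside $\Gamma_0$ interacts with the scheme structure on $T(\Gamma_g)$ and on the dependence locus: one must verify that the extra tangent direction at $x_0$ genuinely thickens $T(\Gamma_g)$ (or splits it) in a way incompatible with the reducedness statement in Lemma \ref{lem:key}(ii), rather than being invisible to these constructions. I expect this to require a careful local computation with the canonical section $\vartheta$ of $\O_{\Gamma\times A}(m^*\Theta)$ and its restriction to the fat subscheme, of the same flavour as Examples \ref{ex:T(S)} and \ref{ex:TS_Jac}, showing that a two-dimensional tangent space at $x_0$ makes the relative zero scheme $Z_{p_2}(\vartheta)$ non-reduced along an entire translate, contradicting Lemma \ref{lem:key}(ii).
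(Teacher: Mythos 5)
There is a genuine gap, and it sits at the heart of your argument. You apply Lemma \ref{lem:deltatheta} to the pair $\Gamma_{g+1}\subset\Gamma$ and then propose to vary $y$ over the ``positive-dimensional set'' $T(\Gamma_{g+1})\setminus T(\Gamma)$. But by Corollary \ref{cor:cayleybach} this set is \emph{empty}: for a theta-general, superabundant $\Gamma$ of degree $g+2$, every theta-translate containing a degree $g+1$ subscheme already contains $\Gamma$. So Lemma \ref{lem:deltatheta} imposes no condition whatsoever on $\Delta(\Gamma_{g+1},\Gamma)$. In fact, since $A$ is integral and $\Delta(\Gamma_{g+1})$ is a proper closed subscheme, the conclusion you draw from Lemma \ref{lem:triple} and $\Delta(\Gamma)=A$ is that $\Delta(\Gamma_{g+1},\Gamma)=A$ (its ideal is zero), not that it is a divisor which is ``up to nilpotents a union of translates $\Theta_{x_0-y}$''. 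This is exactly the mechanism by which $\Gamma$ manages to be superabundant while $\Gamma_{g+1}$ is not, so the intended clash between a one-parameter family of translates and the single reduced component $\Theta_{b-x}$ of $\Delta(\Gamma_{g+1})$ never materializes. Beyond this, the decisive steps are only gestured at (``produces a trisecant-type rigidity'', ``the upshot is a numerical contradiction'', ``I expect this to require a careful local computation''), so even setting the above aside there is no complete argument.

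For comparison, the paper's proof is short and uses Lemma \ref{lem:key} quite differently, namely as a \emph{uniqueness} statement rather than as a reducedness constraint to be violated. Since $\Gamma$ is Gorenstein, each point $a$ of $\Gamma$ determines a unique $\Gamma_{g+1}\subset\Gamma$ with residual point $a$; by \cite[Lemma 1.4]{EH}, curvilinearity follows once one shows that each residual point $b$ of $\Gamma_{g+1}$ likewise determines a unique $\Gamma_g\subset\Gamma_{g+1}$. Given two candidates $\Gamma_g^1,\Gamma_g^2$ with the same residual point $b$, part (i) of Lemma \ref{lem:key} produces points $x_i$ with $\Gamma_g^i\subset\Theta_{x_i}$ but $\Gamma_{g+1}\not\subset\Theta_{x_i}$, and part (ii) identifies the divisorial part of $\Delta(\Gamma_{g+1})$ with $\Theta_{b-x_i}$ for \emph{either} $i$, forcing $x_1=x_2$. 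A single theta-translate then contains both $\Gamma_g^1$ and $\Gamma_g^2$ but not $\Gamma_{g+1}$, which forces $\Gamma_g^1=\Gamma_g^2$. If you want to salvage your tangent-space approach, this uniqueness of the degree $g$ residual is the statement you should be aiming for; the dependence-locus computation you sketch does not lead there.
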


\begin{proof}
Since $\Gamma$ is Gorenstein, it suffices to show that every
subscheme $\Gamma_{g+1}\subset\Gamma$ of degree $g+1$ is also
Gorenstein. In other words, Gorenstein means the choice of a point
$a\in\Gamma$ uniquely determines a subscheme $\Gamma_{g+1}\subset\Gamma$
with residual point $a$. If also the choice of a residual point
$b\in\Gamma_{g+1}$ uniquely determines $\Gamma_{g}\subset\Gamma_{g+1}$, then
$\Gamma$ is curvilinear \cite[Lemma 1.4]{EH}.

Thus we suppose that $\Gamma_{g}^1$ and $\Gamma_{g}^2$ are two
subschemes of $\Gamma_{g+1}$ of degree $g$ with residual point $b$. By
the first part of the Lemma, there are unique points $x_1$ and $x_2$
such that $\Gamma_{g}^i$ is contained in $\Theta_{x_i}$, but $\Gamma_{g+1}$
is not. By the second part of the lemma, the divisorial part of $\Delta(\Gamma_{g+1})$ equals $\Theta_{b-x_i}$, for
either $i$,
and so $x_1=x_2$. Call this point $x$. Then $\Theta_x$ contains both
$\Gamma_{g}^1$ and $\Gamma_{g}^2$, but not $\Gamma_{g+1}$, which is impossible
unless $\Gamma_{g}^1 = \Gamma_{g}^2$.
\end{proof}

\begin{proof}[Proof of Lemma \ref{lem:key}]
By theta-genericity, there exists a theta-translate $\Theta_x$ that
contains $\Gamma_g$, but not $\Gamma_{g+1}$ (or equivalently, not $\Gamma_{g+2}$, by
Remark \ref{rem:translates}).
We claim there are inclusions
\begin{equation}\label{eq:inclusions}
\Theta_{b-x} \subseteq \Delta(\Gamma_{g+1}) \subseteq
\Theta_{b-x}\cup \Delta(\Gamma_g),
\end{equation}
where the union on the right hand side is scheme theoretically defined by
taking the product of the corresponding ideals.
Since $\Delta(\Gamma_g)$ has codimension at least two,
by Lemma \ref{lem:codim2}, this immediately gives part (ii) of the Lemma.
Then also part (i) follows,
since $\Theta_{b-x}$ and thus $x$ is uniquely determined by the pair $\Gamma_g\subset\Gamma_{g+1}$.

Now we prove \eqref{eq:inclusions}.
The finite scheme $\Gamma_{g+2}$ is Gorenstein by Corollary \ref{cor:gor}.
Thus, by Lemma \ref{lem:resgor},
the residual scheme $S$ of $\Gamma_g$ in $\Gamma_{g+2}$ is well formed,
so it has degree two.
Let $a$ be the residual point of $\Gamma_{g+1}$ in $\Gamma_{g+2}$.
By Corollary \ref{cor:cayleybach}, the intersection $\Gamma_{g+2}\cap \Theta_x$
cannot have degree $g+1$, so it must equal $\Gamma_g$.
In other words, locally, the ideal of $\Gamma_g$ is generated by the ideal of $\Gamma_{g+2}$ together with a local
equation for $\Theta_x$.
From this one deduces that there is an exact commutative diagram:
\begin{equation*}
\xymatrix{
&0&0\\
& \I_{\Gamma_g/\Theta_x} \ar@{=}[r]\ar[u] & \I_{\Gamma_g/\Theta_x}\ar[u]\\
0\ar[r] & \I_{\Gamma_{g+2}} \ar[u]\ar[r] & \I_{\Gamma_{g+1}} \ar[r]\ar[u] & k(a)  \ar[r]&0 \\
0\ar[r] & \I_S(-\Theta_x) \ar[u]\ar[r] & \I_{\{b\}}(-\Theta_x) \ar[u]\ar[r] & k(a) \ar@{=}[u] \ar[r]&0 \\
&0\ar[u]&0\ar[u]
} 
\end{equation*}
Now twist this diagram by $2\Theta$, note that
$2\Theta-\Theta_x$ is linearly equivalent to $\Theta_{-x}$, and apply
the Fourier-Mukai transform. Using the short exact sequence \eqref{eq:theta-dual}
with $Z=\set{b}$, we find
\begin{equation*}
\widehat{\I_{\{b\}}(\Theta_{-x})} \iso \res{\P_b}{\Theta_{b-x}}.
\end{equation*}
With $Z=S$, the vanishing locus $T(S)$ of $F$ in \eqref{eq:theta-dual} has
codimension two, hence the cokernel $\widehat{\I_S(\Theta_{-x})}$ is torsion free of rank one,
i.e.~it is a twist of the ideal of $T(S)$. It follows that
\begin{equation*}
\widehat{\I_S(\Theta_{-x})} \iso \I_{T(S)_{-x}}(\Theta_{a+b-x})
\end{equation*}
and so we arrive at the exact commutative diagram:
\begin{equation}\label{eq:keydiag}
\xymatrix{
&&0&0\\
&& R^1\mathcal{S}(\I_{\Gamma_g/\Theta_x}(2\Theta))\ar@{=}[r]\ar[u]& R^1\mathcal{S}(\I_{\Gamma_g/\Theta_x}(2\Theta))
\ar[u]\\
& \P_a \ar[r] & R^1\mathcal{S}(\I_{\Gamma_{g+2}}(2\Theta))\ar[u] \ar[r]^\rho &
R^1\mathcal{S}(\I_{\Gamma_{g+1}}(2\Theta)) \ar[u]\ar[r] & 0 \\
0 \ar[r] & \P_a \ar@{=}[u]\ar[r] & \I_{T(S)_{-x}}(\Theta_{a+b-x}) \ar[u]_\mu\ar[r] & \res{\P_b}{\Theta_{b-x}} 
\ar[u]_\nu \ar[r]&0 
}
\end{equation}

We claim that $\nu$ is injective.
First note that $\rho$ is not an isomorphism,
since its domain has $\Delta(\Gamma_{g+2})=A$ as support ($\Gamma_{g+2}$ is superabundant), whereas its codomain is
torsion, supported in $\Delta(\Gamma_{g+1})$.
It follows that $\mu$ is nonzero, and having torsion free domain of rank one, it is injective.
By a diagram chase
we find that also $\nu$ is injective.
The vertical short exact sequence on the right thus shows
\begin{equation*}
\Theta_{b-x} \subseteq \Delta(\Gamma_{g+1}) \subseteq
\Theta_{b-x}\cup \Fitt R^1\mathcal{S}(\I_{\Gamma_g/\Theta_x}(2\Theta)).
\end{equation*}
Thus we are done once we have shown that
\begin{equation*}
R^1\mathcal{S}(\I_{\Gamma_g}(2\Theta))\iso R^1\mathcal{S}(\I_{\Gamma_g/\Theta_x}(2\Theta)),
\end{equation*}
and in fact, such an isomorphism is obtained from the short exact sequence
\begin{equation*}
0 \to \O_A(-\Theta_x) \to \I_{\Gamma_g} \to \I_{\Gamma_g/\Theta_x} \to 0
\end{equation*}
by twisting with $2\Theta$ and then applying the Fourier-Mukai transform.
\end{proof}

\subsection{Sums of theta-duals and dependence loci}

The following Lemma will be an essential ingredient in our proof of the Castelnuovo statement.

\begin{lem}\label{lem:Tsum}
Let $\Sigma_i\subset\Gamma_{g+1}\subset\Gamma_{g+2}$ be finite subschemes of $A$,
of degree indicated by their subscripts, and assume that $\Gamma_{g+2}$ is
theta-general and superabundant.
Define $x$ to be the residual point of $\Gamma_{g+1}$ in $\Gamma_{g+2}$,
let $j = g+1-i$ and let $\Sigma_{i+1}$ and $\Lambda_j\subset\Lambda_{j+1}$ be subschemes of $\Gamma_{g+2}$ such that the
underlying zero cycles satisfy
\begin{align*}
[\Gamma_{g+1}] &= [\Sigma_i] + [\Lambda_j] &
[\Sigma_{i+1}] &= [\Sigma_i] + x &
[\Lambda_{j+1}] &= [\Lambda_j] + x.
\end{align*}
Then there is an inclusion of schemes
\begin{equation*}
T(\Sigma_i,\Sigma_{i+1}) + T(\Lambda_j,\Lambda_{j+1})
\subseteq \Delta(\Gamma_{g+1})
\end{equation*}
where the left hand side denotes the scheme theoretic image of
$T(\Sigma_i,\Sigma_{i+1})\times T(\Lambda_j,\Lambda_{j+1})$ under the
group law $m \colon A\times A \to A$.
\end{lem}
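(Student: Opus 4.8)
The plan is to reduce the statement to a computation with the key diagram \eqref{eq:keydiag} by identifying each of the two theta-dual differences $T(\Sigma_i,\Sigma_{i+1})$ and $T(\Lambda_j,\Lambda_{j+1})$ with a component of the theta-dual $T(S)$ of the residual degree-two scheme $S$ of $\Gamma_g := \Gamma_{g+1}\cap\Theta_x$ in $\Gamma_{g+2}$, and then reading off the sum from the fact that $\Delta(\Gamma_{g+1})$ contains (up to the codimension-two locus $\Delta(\Gamma_g)$) the translate $\Theta_{b-x}$, which in turn contains a translate of $T(S)$ by an Abel-Jacobi-type product structure. More concretely: first I would invoke Corollary \ref{cor:cayleybach} to identify $\Gamma_{g+2}\cap\Theta_x=\Gamma_g$ (the degree-$g$ scheme whose residual in $\Gamma_{g+1}$ is $x$), so that $S$, the residual of $\Gamma_g$ in $\Gamma_{g+2}$, is well formed of degree two by Lemma \ref{lem:resgor} (using that $\Gamma_{g+2}$ is Gorenstein, Corollary \ref{cor:gor}).

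Next I would analyze $T(S)$. Since $S$ has degree two and, by Corollary \ref{cor:curv}, $\Gamma_{g+2}$ is curvilinear, $S$ is a degree-two subscheme of a nonsingular curve; the crucial point is that the decomposition $[\Gamma_{g+1}]=[\Sigma_i]+[\Lambda_j]$ together with $[\Sigma_{i+1}]=[\Sigma_i]+x$, $[\Lambda_{j+1}]=[\Lambda_j]+x$ forces $[S]$ to be built from the two ``extra'' points, one coming from each side. The heart of the argument is then a splitting statement for the theta-dual of a length-two scheme, analogous to Example \ref{ex:TS_Jac}: I expect $T(S)$ to decompose (scheme theoretically, allowing a multiplicity-two structure in the degenerate case) as a sum/union of the pieces $T(\Sigma_i,\Sigma_{i+1})_{\alpha}$ and $T(\Lambda_j,\Lambda_{j+1})_{\beta}$ for suitable translations $\alpha,\beta$ depending linearly on the two points, so that after translating back by $-x$ one obtains precisely $T(\Sigma_i,\Sigma_{i+1})+T(\Lambda_j,\Lambda_{j+1})$ inside a translate of $T(S)$. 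Combining this with the computation $\widehat{\I_S(\Theta_{-x})}\iso\I_{T(S)_{-x}}(\Theta_{a+b-x})$ already performed in the proof of Lemma \ref{lem:key}, the support of $\I_{T(S)_{-x}}$ is $T(S)_{-x}$, which contains the sum in question as a closed subscheme.

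Finally I would feed this into diagram \eqref{eq:keydiag}. The injective map $\mu\colon \I_{T(S)_{-x}}(\Theta_{a+b-x})\hookrightarrow R^1\mathcal{S}(\I_{\Gamma_{g+2}}(2\Theta))$ together with the surjection $\rho$ onto $R^1\mathcal{S}(\I_{\Gamma_{g+1}}(2\Theta))$ shows that $\Delta(\Gamma_{g+1})$, being the Fitting support of the latter, contains the schematic image of $T(S)_{-x}$ under the relevant quotient; chasing the diagram as in the proof of Lemma \ref{lem:key} (where exactly this reasoning gave $\Theta_{b-x}\subseteq\Delta(\Gamma_{g+1})$), one gets $T(S)_{-x}\subseteq\Delta(\Gamma_{g+1})$ away from $\Delta(\Gamma_g)$, and hence the sum $T(\Sigma_i,\Sigma_{i+1})+T(\Lambda_j,\Lambda_{j+1})\subseteq\Delta(\Gamma_{g+1})$, using Lemma \ref{lem:codim2} to discard the codimension-two correction term as before. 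I expect the main obstacle to be the second paragraph: making the decomposition of $T(S)$ into the two relative theta-dual pieces precise and scheme theoretic, including the degenerate case where the two points of $S$ collide or the two translates coincide, where one must argue by perturbing and taking flat limits exactly as in Example \ref{ex:TS_Jac}, and checking that the translations $\alpha,\beta$ match the residual-point bookkeeping imposed by the cycle equalities in the hypothesis.
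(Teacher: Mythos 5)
Your proposal has a genuine gap: the central reduction cannot work for dimension reasons. You propose to realize the Minkowski sum $T(\Sigma_i,\Sigma_{i+1})+T(\Lambda_j,\Lambda_{j+1})$ inside a translate of $T(S)$ for a degree-two scheme $S$. But the theta-dual of a degree-two scheme has codimension two in $A$ (roughly $\Theta_a\cap\Theta_b$), whereas $T(\Sigma_i)$ and $T(\Lambda_j)$ have codimension at most $i$ and $j$, so with $i+j=g+1$ the sum is expected to be a divisor of dimension $(g-i)+(g-j)=g-1$ --- and the whole point of the lemma is that this divisor lies in the divisor $\Delta(\Gamma_{g+1})$. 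A $(g-1)$-dimensional scheme cannot sit inside a $(g-2)$-dimensional one, so no decomposition of $T(S)$ of the kind you postulate can exist, and no analogue of Example \ref{ex:TS_Jac} will produce one. You have also conflated two different points called $x$: in Lemma \ref{lem:key} and diagram \eqref{eq:keydiag}, $x$ is a point of $A$ with $\Gamma_g\subset\Theta_x$, while in Lemma \ref{lem:Tsum} it is the residual point of $\Gamma_{g+1}$ in $\Gamma_{g+2}$, a point of the subscheme itself; writing $\Gamma_{g+1}\cap\Theta_x$ with the latter $x$ is not meaningful, and the degree-two residual scheme of the Key Lemma plays no role here. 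Finally, the diagram chase you invoke yields $\Theta_{b-x}\subseteq\Delta(\Gamma_{g+1})$, not a containment of $T(S)_{-x}$ that could be fed back into the sum.

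The missing idea is to work on the product $T(\Sigma_i)\times T(\Lambda_j)$ and exploit the splitting $2\Theta=\Theta+\Theta$. The paper pulls back $R^1\mathcal{S}(\I_{\Gamma_{g+1}}(2\Theta))$ along the addition map $\nu\colon T(\Sigma_i)\times T(\Lambda_j)\to A$ (legitimate because Fitting ideals commute with base change) and, using the sequence $0\to\I_{\Gamma_{g+2}}(2\Theta)\to\I_{\Gamma_{g+1}}(2\Theta)\to k(x)\to 0$ together with superabundance of $\Gamma_{g+2}$, reduces the lemma to showing that the evaluation map $\T(\I_{\Gamma_{g+1}}(2\Theta))\to\nu^*\P_x$ is surjective over $(T(\Sigma_i)\setminus T(\Sigma_{i+1}))\times(T(\Lambda_j)\setminus T(\Lambda_{j+1}))$. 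This is exactly where the hypothesis $[\Gamma_{g+1}]=[\Sigma_i]+[\Lambda_j]$ enters: the multiplication map $\I_{\Sigma_i}(\Theta)\otimes\I_{\Lambda_j}(\Theta)\to\I_{\Gamma_{g+1}}(2\Theta)$ of Remark \ref{rem:resunion} factors that evaluation through the exterior tensor product of two one-variable evaluations $\T_1(\I_{\Sigma_i}(\Theta))\to\res{\P_x}{T(\Sigma_i)}$ and $\T_2(\I_{\Lambda_j}(\Theta))\to\res{\P_x}{T(\Lambda_j)}$, each a map of line bundles vanishing precisely on $T(\Sigma_{i+1})$, resp.\ $T(\Lambda_{j+1})$. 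None of this two-variable structure is present in your sketch, and it cannot be recovered from the Key Lemma's diagram alone.
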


\begin{proof}
Note that the equalities of zero cycles define the various finite subschemes
uniquely, as $\Gamma_{g+2}$ is curvilinear by Corollary \ref{cor:curv}.

We rephrase the statement a little: since formation of Fitting ideals
commute with base change, it suffices to show that
\begin{equation}\label{eq:T-inclusion}
T(\Sigma_i,\Sigma_{i+1})\times T(\Lambda_j,\Lambda_{j+1}) 
\subseteq
\Fitt(\nu^*R^1\mathcal{S}(\I_{\Gamma_{g+1}}(2\Theta))),
\end{equation}
where
\begin{equation*}
\nu\colon T(\Sigma_i)\times T(\Lambda_j) \to A
\end{equation*}
is the restriction of the group law.

To understand the right hand side of \eqref{eq:T-inclusion}, we begin
with the short exact sequence
\begin{equation}\label{eq:ses-x}
0 \to \I_{\Gamma_{g+2}}(2\Theta) \to \I_{\Gamma_{g+1}}(2\Theta) \to k(x) \to 0.
\end{equation}
Instead of first applying Fourier-Mukai, and then pulling back by $\nu$,
we encode both operations in the functor $\T$ sending a sheaf
$\F$ on $A$ to the sheaf
\begin{equation*}
\T(\F) = p_{23*}(p_1^*(\F)
\tensor (1\times \nu)^*\P)
\end{equation*}
on $T(\Sigma_i)\times T(\Lambda_j)$. Here $p_k$ and $p_{kl}$ denote the
various projections from $A\times T(\Sigma_i)\times T(\Lambda_j)$.
In standard terminology, $\T$ (or its total derived functor)
is the Fourier-Mukai transform with kernel
\begin{equation}\label{eq:Tkernel}
(1\times \nu)^*\P
\iso p_{12}^*(\res{\P}{A\times T(\Sigma_i)})\tensor p_{13}^*(\res{\P}{A\times T(\Lambda_j)}).
\end{equation}
Applying $\T$ to \eqref{eq:ses-x}, we get a long exact sequence
\begin{multline*}
0 \to \T(\I_{\Gamma_{g+2}}(2\Theta))
\to \T(\I_{\Gamma_{g+1}}(2\Theta))
\to \T(k(x))\\
\to R^1\T(\I_{\Gamma_{g+2}}(2\Theta))
\to R^1\T(\I_{\Gamma_{g+1}}(2\Theta))
\to 0.
\end{multline*}
If $\F$ is a sheaf on $A$ and $p$ is maximal such that $R^p\mathcal{S}(\F)\ne 0$,
then base change shows that $\nu^*R^p\mathcal{S}(\F)\iso R^p\T(\F)$. Using
this, we can rewrite the last few terms in the long exact sequence, and obtain
\begin{multline}\label{eq:phi}
0 \to \T(\I_{\Gamma{g+2}}(2\Theta))
\to \T(\I_{\Gamma_{g+1}}(2\Theta))
\xrightarrow{\phi} \nu^*\P_x\\
\to \nu^*R^1\mathcal{S}(\I_{\Gamma_{g+2}}(2\Theta))
\to \nu^*R^1\mathcal{S}(\I_{\Gamma_{g+1}}(2\Theta))
\to 0.
\end{multline}
As $\Gamma_{g+2}$ is superabundant, the support of
$R^1\mathcal{S}(\I_{\Gamma_{g+2}}(2\Theta))$ is all of $A$,
so the Fitting support of
its pullback by $\nu$ is all of $T(\Sigma_i)\times T(\Lambda_j)$. So it is enough to prove the following claim.

\begin{claim}\label{cl:phi}
The homomorphism $\phi$ in the long exact sequence \eqref{eq:phi} is surjective over
$(T(\Sigma_i)\setminus T(\Sigma_{i+1}))\times(T(\Lambda_j)\setminus
T(\Lambda_{j+1}))$.
\end{claim}

To prove the claim, we use the commutative diagram
\begin{equation*}
\xymatrix{
\I_{\Sigma_i}(\Theta)\otimes\I_{\Lambda_j}(\Theta)\ar@{>>}[d]\ar[r]&\I_{\Gamma_{g+1}}(2\Theta)\ar@{>>}[d]\\ 
k(x)\otimes k(x)\ar[r]^{\cong}& k(x),
}
\end{equation*}
where the vertical arrows are the evaluation maps at $x$ as in \eqref{eq:ses-x}, and the horizontal arrows
are multiplication maps.
The top map is well defined by Remark \ref{rem:resunion}.
If we apply the functor $\T$ to the previous diagram, we get
\begin{equation}\label{eq:diag2}
\xymatrix{
\T\left(\I_{\Sigma_i}(\Theta)\otimes\I_{\Lambda_j}(\Theta)\right)\ar[d]\ar[r]&\T\left(\I_{\Gamma_{g+1}}
(2\Theta)\right)\ar[d]\\ 
\T\left(k(x) \otimes k(x)\right)\ar[r]^{\cong}& \nu^*\P_x
}
\end{equation}
Using \eqref{eq:Tkernel}, we compute
\begin{equation}\label{eq:TIL}
\T(\I_{\Sigma_i}(\Theta)\otimes\I_{\Lambda_j}(\Theta))
\cong p_{23*}\left(\mathcal{G}_1\otimes \mathcal{G}_2\right)
\end{equation}
where $\mathcal{G}_1=p_{12}^*(p_{1}^*\I_{\Sigma_i}(\Theta)\otimes\res{\P}{A\times T(\Sigma_i)})$ and
$\mathcal{G}_2=p_{13}^*(p_{1}^*\I_{\Lambda_j}(\Theta)\otimes \res{\P}{A\times T(\Lambda_j)})$.

Consider now the natural map
\begin{equation}
p_{23*}(\mathcal{G}_1)\otimes p_{23*}(\mathcal{G}_2)\stackrel{\varrho}\to
p_{23*}(\mathcal{G}_1\otimes\mathcal{G}_2).\label{eq:varrho}
\end{equation}
By \eqref{eq:TIL} we recognize the codomain of $\varrho$.
To understand the domain, we define the functor $\T_1$, sending a sheaf $\F$ on $A$ to the sheaf
\begin{equation*}
\T_1(\F) = p_{2*}(p_1^*(\F)\otimes\res{\P}{A\times T(\Sigma_i)})
\end{equation*}
on $T(\Sigma_i)$.
In other words, $\T_1$ is the Fourier-Mukai transformation with kernel $\res{\P}{A\times T(\Sigma_i)}$.
Analogously, let $\T_2$ be the Fourier-Mukai transformation with kernel $\res{\P}{A\times T(\Lambda_j)}$.
With this notation, and by \eqref{eq:TIL}, the homomorphism $\varrho$ becomes
\begin{align*}
p_1^*\T_1(\I_{\Sigma_i}(\Theta))\otimes  p_2^*\T_2(\I_{\Lambda_j}(\Theta))\stackrel{\varrho}\to
\T(\I_{\Sigma_i}(\Theta)\otimes\I_{\Lambda_j}(\Theta)).
\end{align*}
Analogously we have a natural homomorphism
\begin{equation*}
p_1^*\T_1(k(x))\otimes  p_2^*\T_2(k(x))\stackrel{\varrho'}\to \T(k(x)\otimes k(x)).
\end{equation*}
Identifying its domain with $p_1^*(\res{\P_x}{T(\Sigma_i)})\otimes p_2^*(\res{\P_x}{T(\Lambda_j)})$,
and its codomain with $\nu^*\P_x$,
we see that this map is the restriction to $T(\Sigma_i)\times T(\Lambda_j)$ of the canonical isomorphism
$p_1^*\P_x\otimes p_2^*\P_x\to m^*\P_x$.

Therefore, composing the vertical maps in diagram \eqref{eq:diag2} with $\varrho$ and $\varrho'$, we get the following
commutative diagram of sheaves on $T(\Sigma_i)\times T(\Lambda_j)$:
\begin{equation}\label{eq:phi-diagram}
\xymatrix{
p_1^*\T_1(\I_{\Sigma_i}(2\Theta))\otimes p_2^*\T_2(\I_{\Lambda_j}(2\Theta))\ar[r]\ar[d] & \T(\I_{\Gamma_{g+1}}(2\Theta))
\ar[d]^{\phi} \\
p_1^*(\res{\P_x}{T(\Sigma_i)})\otimes p_2^*(\res{\P_x}{T(\Lambda_j)}) \ar[r]^(.6){\cong}& \nu^*\P_x. 
}
\end{equation}
Thus Claim \ref{cl:phi} follows if we can prove that the leftmost vertical map in this diagram surjects over
$(T(\Sigma_i)\setminus T(\Sigma_{i+1}))\times(T(\Lambda_j)\setminus T(\Lambda_{j+1}))$.
In fact:

\begin{claim}\label{cl:T1}
The map
$\T_1(\I_{\Sigma_i}(2\Theta))\to\res{\P_x}{T(\Sigma_i)}$
is a homomorphism between invertible sheaves on $T(\Sigma_i)$, with vanishing locus
$T(\Sigma_{i+1})$.
\end{claim}

This claim, together with the analogous statement for $\T_2$, shows that, over $(T(\Sigma_i)\setminus
T(\Sigma_{i+1}))\times(T(\Lambda_j)\setminus T(\Lambda_{j+1}))$,
 the leftmost vertical map in
\eqref{eq:phi-diagram} is a nonvanishing map between invertible sheaves. Hence it is an isomorphism, and in particular
it is surjective.
 
It remains to prove Claim \ref{cl:T1}.
Associated to $\Sigma_i\subset\Sigma_{i+1}$ there is a commutative diagram
\begin{equation*}
\xymatrix{
0 \ar[r] & \I_{\Sigma_i}(\Theta) \ar[r] \ar@{->>}[d] & \O(\Theta) \ar[r] \ar@{->>}[d] & \O_{\Sigma_i} \ar[r] \ar@{=}[d]
& 0 \\
0 \ar[r] & k(x) \ar[r] & \O_{\Sigma_{i+1}} \ar[r] & \O_{\Sigma_i} \ar[r] & 0
}
\end{equation*}
with exact rows. Apply $\T_1$ to obtain the commutative diagram
\begin{equation*}
\xymatrix{
0 \ar[r] & \T_1(\I_{\Sigma_i}(\Theta)) \ar[r] \ar[d] & \res{\widehat{\O(\Theta)}}{T(\Sigma_i)} \ar[r]^0 \ar[d] &
\res{\widehat{\O_{\Sigma_i}}}{T(\Sigma_i)} \ar@{=}[d] \\
0 \ar[r] & \res{\P_x}{T(\Sigma_i)} \ar[r] & \res{\widehat{\O_{\Sigma_{i+1}}}}{T(\Sigma_i)} \ar[r] &
\res{\widehat{\O_{\Sigma_i}}}{T(\Sigma_i)}
}
\end{equation*}
with exact rows.
The vanishing locus of the vertical map in the middle is $T(\Sigma_{i+1})$.
The indicated map vanishes by definition of $T(\Sigma_i)$, so
$\T_1(\I_{\Sigma_i}(\Theta))$ is isomorphic to the invertible sheaf
$\res{\widehat{\O(\Theta)}}{T(\Sigma_i)}$,
and the vanishing locus of
$\T_1(\I_{\Sigma_i}(\Theta)) \to \res{\P_x}{T(\Sigma_i)}$
is precisely $T(\Sigma_{i+1})$.
This proves Claim \ref{cl:T1}, which concludes the proof of the Lemma.
\end{proof}

\section{Schottky}\label{sec:schottky}

In this section we prove part (1) of Theorem \ref{thm} by constructing
many trisecants to the Kummer variety of $A$.

\begin{prop}\label{prop:trinotation}
Let 
$\Gamma_{g-1} \subset \Gamma_g \subset \Gamma_{g+1} \subset \Gamma_{g+2}$
be finite subschemes of $A$ of degrees indicated by the subscripts, and assume
$\Gamma_{g+2}$ is theta-general and superabundant.
Let $S$ be the residual
scheme of $\Gamma_{g-1}$ in $\Gamma_{g+1}$ and $a$, $b$ the residual points of $\Gamma_{g-1}$ in $\Gamma_{g}$,
$\Gamma_{g}$ in $\Gamma_{g+1}$. Then, for every pair of closed points $y,y'\in T(\Gamma_{g-1})\setminus T(\Gamma_g)$ we
have
\begin{equation}\label{eq:tri}
\Theta_{a-y} \cap \Theta_{b-x} \subseteq T(S)_{-y} \cup \Theta_{a-y'},
\end{equation}
where $x$ is the only closed point in $T(\Gamma_g,\Gamma_{g+1})$, and where
the union on the right is defined as a scheme by the product of the corresponding ideals.
\end{prop}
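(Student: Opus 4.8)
The plan is to imitate the proof of Lemma~\ref{lem:key}, now for the flag $\Gamma_{g-1}\subset\Gamma_g\subset\Gamma_{g+1}\subset\Gamma_{g+2}$: build one exact commutative diagram of Fourier--Mukai transforms and read off \eqref{eq:tri} by comparing Fitting supports. A few reductions and preliminary observations come first.

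If $y=y'$ the asserted inclusion is trivial, since $\Theta_{a-y}\cap\Theta_{b-x}\subseteq\Theta_{a-y}$ is already contained in the right hand side; so I would assume $y\neq y'$. Next I would collect what the earlier results give for free. By Lemma~\ref{lem:key}, applied to $\Gamma_g\subset\Gamma_{g+1}\subset\Gamma_{g+2}$ with exactly the $b$ and $x$ of the Proposition, the divisorial part of $\Delta(\Gamma_{g+1})$ is the reduced divisor $\Theta_{b-x}$; hence $\Theta_{b-x}\subseteq\Delta(\Gamma_{g+1})$ as schemes, and so $\Theta_{a-y}\cap\Theta_{b-x}\subseteq\Delta(\Gamma_{g+1})$. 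By Corollary~\ref{cor:curv} the scheme $\Gamma_{g+2}$ is curvilinear, so $S$ is well formed of degree two with $[S]=a+b$, and the residual sequence $0\to\I_{\Gamma_{g+1}}\to\I_{\Gamma_{g-1}}\to\O_S\to 0$ is exact. Finally, Lemma~\ref{lem:deltatheta} applied to $\Gamma_{g-1}\subset\Gamma_g$, once with $y$ and once with $y'$, shows that the kernel of the surjection $R^1\mathcal{S}(\I_{\Gamma_g}(2\Theta))\to R^1\mathcal{S}(\I_{\Gamma_{g-1}}(2\Theta))$ from \eqref{eq:phi-sequence} is a quotient of $\P_a$ supported inside $\Theta_{a-y}\cap\Theta_{a-y'}$; equivalently, that map is an isomorphism over the complement of $\Theta_{a-y'}$.

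The core of the argument is the diagram. I would assemble the residual sequences for $\Gamma_{g-1}\subset\Gamma_g$ (residual point $a$, using $\Gamma_{g-1}\subset\Theta_y$, so that $2\Theta-\Theta_y$ is linearly equivalent to $\Theta_{-y}$), for $\Gamma_g\subset\Gamma_{g+1}$ (residual point $b$, using $\Gamma_g\subset\Theta_x$, so $2\Theta-\Theta_x\sim\Theta_{-x}$), and for $\Gamma_{g-1}\subset\Gamma_{g+1}$ with residual $S$, into a single exact commutative diagram, twist by $2\Theta$, and apply the Fourier--Mukai functor. Using the exact sequence \eqref{eq:theta-dual} one identifies the relevant transforms: that of $\I_{\{b\}}(\Theta_{-x})$ is $\res{\P_b}{\Theta_{b-x}}$, exactly as in Lemma~\ref{lem:key}; and --- provided $\codim_A T(S)=2$, which I would verify as in that proof, handling the exceptional codimension-one case separately just as there --- the transform of $\I_S(\Theta_{-y})$ is a twist of the ideal sheaf of $T(S_y)=T(S)_{-y}$. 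Collating the diagram and chasing it in the style of the proof of Claim~\ref{cl:phi} then exhibits a sheaf that has $\Theta_{a-y}\cap\Theta_{b-x}$ in its Fitting support as a quotient of a sheaf whose Fitting support is the product-of-ideals union $T(S)_{-y}\cup\Theta_{a-y'}$: the factor $T(S)_{-y}$ comes from the transform of $\I_S(\Theta_{-y})$, and the factor $\Theta_{a-y'}$ is forced by the failure, noted above, of $R^1\mathcal{S}(\I_{\Gamma_g}(2\Theta))\to R^1\mathcal{S}(\I_{\Gamma_{g-1}}(2\Theta))$ to be an isomorphism along $\Theta_{a-y}\cap\Theta_{a-y'}$. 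Since passing to a quotient can only enlarge Fitting ideals, this yields \eqref{eq:tri}.

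As elsewhere in this section, the main obstacle is not a single idea but the scheme-theoretic bookkeeping: making the enlarged diagram genuinely commutative and exact with the correct twisting line bundles; establishing $\codim_A T(S)=2$, so that $\widehat{\I_S(\Theta_{-y})}$ is torsion free of rank one; and, crucially, carrying the union on the right hand side as the \emph{product} of ideals throughout the chase, since that is precisely what makes \eqref{eq:tri} sharp enough to produce trisecants in the next step. The base-change compatibility of Fitting supports and the translation identity $T(Y_c)=T(Y)_{-c}$, both already used in Lemma~\ref{lem:Tsum}, are needed freely.
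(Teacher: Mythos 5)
Your overall scaffolding --- residual exact sequences assembled into a commutative diagram, twisted by $2\Theta$ and pushed through the Fourier--Mukai functor, with Lemma~\ref{lem:key} supplying $\Theta_{b-x}$ and Lemma~\ref{lem:deltatheta} supplying $\Theta_{a-y'}$ --- matches the paper's. But the step you describe as ``collating the diagram and chasing it'' is exactly where the content of the Proposition lives, and the mechanism you propose there does not work. The paper first proves the intermediate schematic inclusion
\begin{equation*}
\Theta_{a-y}\cap\Delta(\Gamma_{g-1},\Gamma_{g+1})\ \subseteq\ T(S)_{-y}\cup\Delta(\Gamma_{g-1},\Gamma_{g}),
\end{equation*}
and this is not a consequence of exactness of any sequence of sheaves: it is a determinantal identity. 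One replaces $\mathcal{S}(\I_{\Gamma_{g-1}}(2\Theta))$ by a locally free sheaf $\F$ having $\O(-\Theta_{-y})$ as a direct summand; the connecting map $f_1\colon\F\to\widehat{\O_{S}}$ is then a $2\times n$ matrix whose distinguished column cuts out $T(S)_{-y}$, whose bottom row cuts out $\Delta(\Gamma_{g-1},\Gamma_g)$, and whose corner entry is a local equation of $\Theta_{a-y}$; Laplace expansion of the $2\times 2$ minors along that column yields precisely the product-of-ideals inclusion. Your substitute --- ``a sheaf that has $\Theta_{a-y}\cap\Theta_{b-x}$ in its Fitting support as a quotient of a sheaf whose Fitting support is the product-of-ideals union $T(S)_{-y}\cup\Theta_{a-y'}$'' --- is never exhibited, and I do not see how it could be: the only object in the diagram tied to $T(S)_{-y}$ is $\widehat{\I_S(\Theta_{-y})}$, which is torsion free of rank one and hence has Fitting support equal to all of $A$, so knowing that the relevant kernel is a quotient of it gives no information. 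What a pure diagram chase (the ``extension of kernels'' version of Lemma~\ref{lem:triple}) does give is $\Delta(\Gamma_{g-1},\Gamma_{g+1})\subseteq\Delta(\Gamma_g,\Gamma_{g+1})\cup\Delta(\Gamma_{g-1},\Gamma_g)\subseteq\Theta_{b-x}\cup\Theta_{a-y'}$, which loses $T(S)_{-y}$ entirely and is too weak to produce trisecants.

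Two smaller points. First, you cannot place the $\Theta_x$-twisted row for $\Gamma_g\subset\Gamma_{g+1}$ (yielding $\res{\P_b}{\Theta_{b-x}}$) in the same diagram as the $\Theta_y$-twisted rows: the twists differ and the squares will not commute. The paper keeps only $\Theta_y$ inside the diagram and imports $\Theta_{b-x}$ afterwards via the Key Lemma; moreover, what is needed at that stage is $\Theta_{b-x}\subseteq\Delta(\Gamma_{g-1},\Gamma_{g+1})$ rather than merely $\Theta_{b-x}\subseteq\Delta(\Gamma_{g+1})$, and this upgrade uses that $\Delta(\Gamma_{g-1})$ has codimension at least two (Lemma~\ref{lem:codim2}) --- a step absent from your sketch. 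Second, your preliminary observations (curvilinearity of $\Gamma_{g+2}$, well-formedness of $S$, the identification of $\widehat{\I_{\set{a}}(\Theta_{-y})}$, the reduction to $y\ne y'$) are all correct, but they are the routine part; the missing minor computation is the proof.
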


\begin{figure}
\input minors.pspdftex
\caption{Matrices in the proof of Proposition \ref{prop:trinotation}}
\label{fig:minors}
\end{figure}

\begin{proof}
We will first establish the schematic inclusion
\begin{equation}\label{eq:kincl}
\Theta_{a-y} \cap \Delta(\Gamma_{g+1},\Gamma_{g-1}) \subseteq
T({S})_{-y} \cup \Delta(\Gamma_g, \Gamma_{g-1})
\end{equation}
and afterwards deduce \eqref{eq:tri} from this.

Since
$\Gamma_{g-1}\subset\Theta_y$ we have, by Remark
\ref{rem:resunion}, inclusions
$\sheaf{I}_S(-\Theta_y) \subset
\sheaf{I}_{\Gamma_{g+1}}$
and $\sheaf{I}_a(-\Theta_y) \subset
\sheaf{I}_{\Gamma_g}$. These give rise to a
commutative diagram
\begin{small}
\begin{equation*}
\xymatrix@!C0@R=6ex@C=3.5em{% !C0 means pretend objects have zero width
& 0 \ar[rr] && \I_{\Gamma_{g+1}} \ar[rr]\ar@{^{(}->}'[d][dd] && \I_{\Gamma_{g-1}} \ar[rr]\ar@{=}'[d][dd] && \O_S
\ar[rr]\ar@{->>}'[d][dd] && 0 \\
0 \ar[rr] && \I_S(-\Theta_y) \ar[rr]\ar@{^{(}->}[dd]\ar@{^{(}->}[ru] && \O(-\Theta_y) \ar[rr]\ar@{=}[dd]\ar@{^{(}->}[ru]
 && \O_S \ar[rr]\ar@{->>}[dd]\ar@{=}[ru]  && 0 \\
& 0 \ar'[r][rr] && \I_{\Gamma_g} \ar'[r][rr] && \I_{\Gamma_{g-1}} \ar'[r][rr] && k(a) \ar[rr] && 0 \\
0 \ar[rr] && \I_{\set{a}}(-\Theta_y) \ar[rr]\ar@{^{(}->}[ru]  && \O(-\Theta_y) \ar[rr]\ar@{^{(}->}[ru]  && k(a)
\ar[rr]\ar@{=}[ru]  && 0
}
\end{equation*}
\end{small}
with exact rows.
Twist the diagram with $2\Theta$, use
that $2\Theta-\Theta_y$ is linearly equivalent to
$\Theta_{-y}$, and apply the Fourier-Mukai transform. This
produces the diagram
\begin{small}
\begin{equation*}
\xymatrix@!C0@C=1.5em@R=6ex{
&&& \F \ar[rrrr]^{f_1} \ar@{=}'[d][dd] &&&& \widehat{\O_S} \ar[rrrrr]\ar@{->>}'[d][dd] &&&&&
R^1\mathcal{S}(\sheaf{I}_{\Gamma_{g+1}}(2\Theta)) \ar[rrrrrr]\ar@{->>}'[d][dd] &&&&&&
R^1\mathcal{S}(\I_{\Gamma_{g-1}}(2\Theta)) \ar[rrrr] \ar[dd] &&&& 0 \\
\O(-\Theta_{-y}) \ar[rrrr]^{f_3} \ar@{=}[dd]\ar@{^{(}->}[rrru] &&&& \widehat{\O_S} \ar[rrrrr]\ar@{->>}[dd]\ar@{=}[rrru]
&&&&& \widehat{\sheaf{I}_S(\Theta_{-y})} \ar[rrrr]\ar@{->>}[dd]\ar[rrru] &&&& 0  \\
&&& \F \ar'[r][rrrr]^(0.3){f_2} &&&& \P_a \ar'[rr][rrrrr] &&&&& R^1\mathcal{S}(\sheaf{I}_{\Gamma_g}(2\Theta))
\ar[rrrrrr] &&&&&& R^1\mathcal{S}(\I_{\Gamma_{g-1}}(2\Theta)) \ar[rrrr] &&&& 0  \\
\O(-\Theta_{-y}) \ar[rrrr]^{f_4} \ar@{^{(}->}[rrru] &&&& \sheaf{P}_a \ar[rrrrr]\ar@{=}[rrru] &&&&&
\widehat{\sheaf{I}_{\set{a}}(\Theta_{-y})} \ar[rrrr]\ar[rrru] &&&& 0
}
\end{equation*}
\end{small}
with exact rows, where $\F$ is $\mathcal{S}(\I_{\Gamma_{g-1}}(2\Theta))$.
Replace $\F$ with a locally free sheaf admitting a surjection to $\mathcal{S}(\I_{\Gamma_{g-1}}(2\Theta))$.
Assume that $\O(-\Theta_{-y})$ is a direct summand of $\F$:
replace $\F$ with $\F\oplus\O(-\Theta_{-y})$ if necessary.
Then we have the above diagram, with $\F$ locally free,
and where the inclusions $\O(-\Theta_{-y})\subset \F$ on the left are split.

Now $\Delta(\Gamma_{g-1},\Gamma_{g+1})$ is locally defined by the maximal minors of $f_1$,
and $\Delta(\Gamma_{g-1},\Gamma_g)$, $T(S)_{-y}$ and $\Theta_{a-y}$ are the vanishing loci of $f_2$, $f_3$ and $f_4$,
respectively.
The inclusion \eqref{eq:kincl} can now be verified explicitly by locally representing $f_1$ by a matrix as indicated in
Figure \ref{fig:minors}.

By the Key Lemma \ref{lem:key}, the divisor $\Theta_{b-x}$ is contained in $\Delta(\Gamma_{g+1})$.
As $\Delta(\Gamma_{g-1})$ has codimension at least $2$, by Lemma \ref{lem:codim2},
it follows that $\Theta_{b-x}$ is contained in the closure of $\Delta(\Gamma_{g+1})\setminus\Delta(\Gamma_{g-1})$,
which in turn is contained in $\Delta(\Gamma_{g-1},\Gamma_{g+1})$.
Moreover, Lemma \ref{lem:deltatheta} shows that $\Delta(\Gamma_{g-1},\Gamma_g)$ is contained in $\Theta_{a-y'}$.
Thus the inclusion \eqref{eq:tri} follows from \eqref{eq:kincl}.
\end{proof}

Now we are ready to obtain a unidimensional family of trisecants in our principally polarized abelian variety.
\begin{prop}\label{prop:tri}
With notation and assumptions as in Proposition \ref{prop:trinotation}, fix a point
$y'\in T(\Gamma_{g-1})\setminus T(\Gamma_{g})$ with $y'+a \ne x+b$, and let $Y=S\cup\set{a+(x-y')}$.
Then we have the following set-theoretical inclusion,
\begin{equation*}
(-T(\Gamma_{g-1})\setminus T(\Gamma_{g}))_{\gamma}\subset V=\set{2\xi\mid \xi+Y\subset\psi^{-1}(l)\text{ for some line
}l\subset \PP^N}
\end{equation*}
where $\psi:A\to \PP^N$, with $N=2^g-1$, is the Kummer map, corresponding to the $\abs{2\Theta}$, and $\gamma=a+b-y'$.
\end{prop}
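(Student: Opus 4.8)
The plan is to recast membership in $V$ as membership in a dependence locus, and then read the latter off from the inclusion \eqref{eq:tri}.

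First I recast $V$. For a finite subscheme $Z\subset A$ of degree $3$, its Kummer image $\psi(Z)$ is contained in a line of $\PP^N$ if and only if $Z$ imposes at most two conditions on $|2\Theta|$, i.e.\ $H^1(A,\mathcal{I}_Z(2\Theta))\ne 0$: any $(2^g-2)$-dimensional subspace of $H^0(A,\mathcal{O}(2\Theta))$ is the space of sections vanishing along a unique line $l$, so a line $l$ with $Z\subseteq\psi^{-1}(l)$ exists exactly when $h^0(\mathcal{I}_Z(2\Theta))\ge 2^g-2$. Taking $Z=\xi+Y$ and using $t_\xi^*\mathcal{O}(2\Theta)\iso\mathcal{O}(2\Theta)\tensor\P_{2\xi}$, one gets $2\xi\in V$ iff $H^1(A,\mathcal{I}_Y(2\Theta)\tensor\P_{2\xi})\ne 0$; as this depends only on $2\xi$ and $[2]\colon A\to A$ is surjective, $V$ coincides, as a set, with the dependence locus $\Delta(Y)$. (Here one uses that the hypotheses make $Y$ a curvilinear subscheme of degree $3$ --- curvilinear because $\Gamma_{g+2}$ is, by Corollary \ref{cor:curv}, and of degree $3$ because $w:=a+(x-y')$ lies off the support of $S$: indeed $x\ne y'$ since $\Theta_x\supseteq\Gamma_g$ while $y'\notin T(\Gamma_g)$, and $w\ne b$ by the assumption $y'+a\ne x+b$ together with theta-genericity --- so that $S$ is the well formed residual subscheme of $\{w\}$ in $Y$, cf.\ Example \ref{ex:rescurvi}.) Thus the claim becomes the set-theoretic inclusion
\begin{equation*}
(-T(\Gamma_{g-1})\setminus T(\Gamma_g))_\gamma\subseteq\Delta(Y),
\end{equation*}
i.e.\ $y'-y-a-b\in\Delta(Y)$ for every closed point $y\in T(\Gamma_{g-1})\setminus T(\Gamma_g)$.

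Next I describe $\Delta(Y)$ through the Fourier--Mukai functor. From $0\to\mathcal{I}_Y(2\Theta)\to\mathcal{I}_{\{w\}}(2\Theta)\to\mathcal{O}_S\to 0$, together with the facts that $\mathcal{O}_S$ is IT$_0$ and that $\mathcal{I}_{\{w\}}(2\Theta)$ is IT$_0$ (since $|2\Theta|$ and its translates are base-point free, the map $\widehat{\mathcal{O}(2\Theta)}\to\widehat{k(w)}$ is surjective), one finds that $R^1\mathcal{S}(\mathcal{I}_Y(2\Theta))$ is the cokernel of a homomorphism $\widehat{\mathcal{I}_{\{w\}}(2\Theta)}\to\widehat{\mathcal{O}_S}$ between locally free sheaves of ranks $2^g-1$ and $2$. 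Hence $\Delta(Y)$ is precisely the locus where this map is not surjective --- equivalently, the vanishing of its ideal of maximal minors --- and the task is to show that this ideal vanishes at $c_0:=y'-y-a-b$. (One may also work instead with $0\to\mathcal{I}_Y(2\Theta)\to\mathcal{I}_S(2\Theta)\to k(w)\to 0$ and, via Lemma \ref{lem:triple}, reduce to proving $c_0\in\Delta(S,Y)$.)

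The key step is to extract this vanishing from \eqref{eq:tri}. As in the proof of Proposition \ref{prop:trinotation} --- and using the identifications already made in the Key Lemma \ref{lem:key}: $2\Theta-\Theta_y$ is linearly equivalent to $\Theta_{-y}$, $\widehat{\mathcal{I}_{\{b\}}(\Theta_{-y})}\iso\res{\P_b}{\Theta_{b-y}}$, and $\widehat{\mathcal{I}_S(\Theta_{-y})}$ is a twist of the ideal sheaf of $T(S)_{-y}$ --- one twists the commutative diagram behind \eqref{eq:tri} by $2\Theta$, applies $\mathcal{S}$, and passes to a locally free presentation in which the line bundle summands $\mathcal{O}(-\Theta_{-y})$ split off (as in Figure \ref{fig:minors}). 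In such a presentation the set-theoretic containment $\Theta_{a-y}\cap\Theta_{b-x}\subseteq T(S)_{-y}\cup\Theta_{a-y'}$ translates into an explicit relation among block matrices, and this relation forces the maximal minors of the presentation matrix of $R^1\mathcal{S}(\mathcal{I}_Y(2\Theta))$ to vanish at $c_0$; hence $c_0\in\Delta(Y)$, as wanted.

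I expect the main obstacle to be exactly this last passage: upgrading the purely set-theoretic containment \eqref{eq:tri} to the vanishing of an honest Fitting (minor) ideal, since \eqref{eq:tri} only controls underlying sets while $\Delta(Y)$ is defined scheme-theoretically. The remedy, in the spirit of Section \ref{sec:key}, is to work throughout with carefully chosen locally free presentations --- splitting off the relevant $\Theta$-translates as direct summands --- so that inclusions of loci become statements about block matrices whose minors can be inspected directly. One should also treat the degenerate configurations where $\psi(\xi+a)$, $\psi(\xi+b)$, $\psi(w)$ fail to be three distinct points, the line $l$ then being osculating rather than a genuine trisecant; this is harmless, as only the set-theoretic inclusion is asserted here.
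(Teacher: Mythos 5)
Your reduction of the statement to a claim about dependence loci is correct and even elegant: for a degree-$3$ scheme $Z$, the existence of a line $l$ with $Z\subseteq\psi^{-1}(l)$ is indeed equivalent to $h^0(\I_Z(2\Theta))\ge 2^g-2$, i.e.\ to $H^1(\I_Z(2\Theta))\ne 0$, and via $t_\xi^*\O(2\Theta)\iso\O(2\Theta)\tensor\P_{2\xi}$ and surjectivity of multiplication by $2$ this identifies $V$ with the underlying set of $\Delta(Y)$. So the proposition is equivalent to $y'-y-a-b\in\Delta(Y)$ for all $y\in T(\Gamma_{g-1})\setminus T(\Gamma_g)$.

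The gap is that you never actually prove this last statement, and it is the entire content of the proposition. The inclusion \eqref{eq:tri} lives on the ``divisor side'' (subschemes of $A$ built from $\Theta$-translates and $T(S)$), while $\Delta(Y)$ lives on the ``Fourier side'' (the Fitting support of $R^1\mathcal{S}(\I_Y(2\Theta))$, whose fibres are $H^1(\I_Y(2\Theta)\tensor\P_c)$). The bridge between the two is precisely the addition formula for theta functions (equivalently, Mukai's exchange of tensor and Pontryagin products), which is what the paper invokes through \cite[Prop.~11.9.3]{BL} in the reduced case and through \cite[proof of Thm 0.5, case ii)]{Wcrit} in the nonreduced case, after first rewriting \eqref{eq:tri} in each case as an explicit inclusion $\Theta_p\cap\Theta_q\subseteq\Theta_r\cup\Theta_s$, resp.\ as an identity $\theta_{x-y'}\cdot\theta_{y-x}=(\mathrm{const.})\,\theta_{y-y'}\cdot s$ on $\Theta$. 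Your substitute --- ``pass to a locally free presentation as in Figure \ref{fig:minors} and inspect block matrices'' --- cannot work as stated: the presentations available from Proposition \ref{prop:trinotation} present $R^1\mathcal{S}$ of the ideal sheaves of $\Gamma_{g+1}$, $\Gamma_g$, $\Gamma_{g-1}$, whereas you need a presentation of $R^1\mathcal{S}(\I_Y(2\Theta))$ for the unrelated degree-$3$ scheme $Y=S\cup\set{a+(x-y')}$ (whose extra point does not even lie on $\Gamma_{g+2}$), and no relation between the two sets of minors is established or even plausible without the addition formula. You correctly flag this passage as ``the main obstacle,'' but the proposed remedy is a restatement of the difficulty rather than a proof; you must either cite the Gunning--Welters/Fay-type equivalence as the paper does (treating separately the case where $S$ is nonreduced, which requires the tangent-line version) or reprove it. A minor additional slip: $a+(x-y')=b$ is equivalent to $y'+b=x+a$, which is not literally what the hypothesis $y'+a\ne x+b$ excludes unless $a-b$ is $2$-torsion, so your justification that $Y$ has degree $3$ needs care at this point as well.
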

\begin{proof}
Since $y'\neq x\in T(\Gamma_g)$ and $b\neq a+(x-y')$, we have that $S$ does not contain $a+(x-y')$,
so $Y$ is well defined as a finite subscheme of degree three. We deal separately with the two possible cases.

\emph{Case i)}. $Y\cong\sum_{i=1}^3 \Spec k$, that is $a\neq b$.
 In this case, $Y=\set{a,b,a+(x-y')}$ and $T(S) = \Theta_a\cap \Theta_b$. So the inclusion \eqref{eq:tri} gives
\begin{equation*}
 \Theta_{a-y} \cap \Theta_{b-x} \subseteq \Theta_{b-y} \cup \Theta_{a-y'},
\end{equation*}
which implies that the three points
\begin{align*}
&\psi\left(a+\tfrac{1}{2}(-y-\gamma)\right),\\
&\psi\left(b+\tfrac{1}{2}(-y-\gamma)\right),\\
&\psi\left(a+(x-y')+\tfrac{1}{2}(-y-\gamma)\right)
\end{align*}
are collinear (see \cite[Prop. 11.9.3]{BL}),
where the factor $\frac{1}{2}$ means any counter image under the multiplication-by-two endomorphism of $A$.

\emph{Case ii)}. $Y\cong\Spec(k[\varepsilon]/\varepsilon^2)+\Spec k$, that is $S$ is a nonreduced scheme supported in
$a=b$.
The inclusion \eqref{eq:tri} can be written
 \begin{equation*}
\Theta\cap  \Theta_{y-x} \subseteq T(S)_{-a}\cup \Theta_{y-y'}.
 \end{equation*}
Let $s\in H^0(\O_\Theta(\Theta))$ be the section corresponding to $T(S)_{-a}$ (see Example \ref{ex:T(S)}). We will also
choose a nonzero section $\theta_t$ of $H^0(\O(\Theta_t))$ for all points $t$. Then $s\cdot\theta_{y-y'}$ vanish on
$\Theta\cap \Theta_{y-x}$. By the exact sequence,
\begin{equation*}
 0\to H^0(\O_{\Theta}(\Theta_{x-y'}))\stackrel{\theta_{y-x}}\to H^0(\O_{\Theta}(\Theta+\Theta_{y-y' })) \to
H^0(\O_{\Theta_{y-x} \cap \Theta}(\Theta+\Theta_{y-y'})),
\end{equation*}
we get that in $H^0(\O_{\Theta}(\Theta+\Theta_{y-y'})),$
\begin{equation*}
 \theta_{x-y'}\cdot\theta_{y-x}=(\text{const.})\theta_{y-y'}\cdot s.
\end{equation*} 
Thus, using \cite[proof of Thm 0.5, case ii)]{Wcrit} we get that, with $\gamma=2a-y'$,
the line $l$ passing through the two points
\begin{align*}
&\psi\left(a+\tfrac{1}{2}(-y-\gamma)\right),
&\psi\left(a+(x-y')+\tfrac{1}{2}(-y-\gamma)\right)
\end{align*}
is tangent at the former. More precisely $S + \frac{1}{2}(-y-\gamma) \subset \psi^{-1}(l)$.
\end{proof}

\begin{rem}\label{rem:objection}
An inclusion of the type $\Theta\cap\Theta_a \subseteq \Theta_b\cup \Theta_c$
must hold scheme theoretically to give a trisecant to the Kummer variety.
The argument \cite[Theorem 5.2]{PPschottky} is written set theoretically,
so we hope that our schematic treatment is clarifying.
\end{rem}

\begin{proof}[Proof of part (1) of Theorem \ref{thm}]
The Gunning-Welters criterion
(\cite{gunning} or \cite[Thm 0.5]{Wcrit})
states that, if the algebraic subset $V\subset A$ in Proposition \ref{prop:tri}
is at least one dimensional, then it is a smooth irreducible curve and $A$ is its Jacobian.
(This holds, according to Welters \cite[Remark 0.7]{Wcrit}, in arbitrary characteristic different from $2$,
when $Y$ is supported in at least two points, as is the case here.)
Since $T(\Gamma_{g-1},\Gamma_g)$ is at least one dimensional, Proposition \ref{prop:tri}
thus implies that $A$ is a Jacobian, and $T(\Gamma_{g-1},\Gamma_g)$ is an Abel-Jacobi curve.
\end{proof}

\begin{rem}\label{rem:YinAJ}
As a first step in the direction of the Castelnuovo statement, Theorem \ref{thm} (2), we note: in Proposition
\ref{prop:tri}, the residual subscheme $S$ of $\Gamma_{g-1}$ in $\Gamma_{g+1}$ is contained in a translate of the
algebraic set $V\subset A$.
In fact, by \cite[Rem.~0.6]{Wcrit}, the degree $3$ subscheme $Y$ is contained in a translate of $V$, and by definition
$Y$ contains $S$.
\end{rem}

\section{Castelnuovo}\label{sec:castelnuovo}

\begin{proof}[Proof of part (2) of Theorem \ref{thm}]
By part (1) of Theorem \ref{thm},
we know that $A=J(C)$ is the Jacobian of some curve $C$.
Let $i$ be maximal such that there exists a degree $i$ subscheme $\Sigma_i\subset\Gamma_{g+2}$,
which is contained in a translate of $\pm C$.
Replace $C$ with this translate of $\pm C$, and fix such $\Sigma_i\subset C$.
Then $i\ge 2$ by Remark \ref{rem:YinAJ}, and the claim is that $i=g+2$.
For contradiction, assume $i\le g+1$.
Then $\Sigma_i$ is contained in a degree $g+1$ subscheme $\Gamma_{g+1}\subset\Gamma_{g+2}$,
which we fix.

With notation as in Lemma \ref{lem:Tsum}, there is a schematic inclusion
\begin{equation}\label{eq:Tsum-again}
T(\Sigma_i, \Sigma_{i+1}) + T(\Lambda_j,\Lambda_{j+1}) \subseteq \Delta(\Gamma_{g+1}).
\end{equation}
where $j=g+1-i \le g-1$.
Now we use that $C$ and $W_{g-2}$ are theta-duals of each other
(Example \ref{ex:T(W)} with $C=W_1$).
Both $C$ and $W_{g-2}$ are a priori defined up to translation;
as we have fixed $C$, we may just as well fix $W_{g-2}$ as $T(C)$.
We infer that the inclusion $\Sigma_i\subset C$ is equivalent to $W_{g-2} \subseteq T(\Sigma_i)$,
so $T(\Sigma_i, \Sigma_{i+1})$ contains $W_{g-2}$.

If $i>2$, or equivalently $j<g-1$, then $T(\Lambda_j,\Lambda_{j+1})$ has dimension at least $2$.
Then $W_{g-2} + T(\Lambda_j,\Lambda_{j+1})$ necessarily equals all of $A$: clearly
$T(\Lambda_j, \Lambda_{j+1})$ has dimension at least $g-j \ge 2$,
and it is well known that all the loci $W_{g-k}$ are geometrically nondegenerate,
which implies that $W_{g-k}+Y=A$ for any $Y\subset A$ of dimension $k$ (see Ran \cite[\S II]{ran} and 
Debarre \cite[Prop.~1.4]{debarre}). Thus the left hand side in
\eqref{eq:Tsum-again} is $A$, which is a contradiction, since
$\Delta(\Gamma_{g+1})$ has codimension one.

The case $i=2$, $j=g-1$ remains.
By Example \ref{ex:TS_Jac}, the theta-dual $T(\Sigma_2)$ is the union $W\cup W'$ of two
translates of $\pm W_{g-2}$ (the two copies may coincide, when $C$ is hyperelliptic,
in which case $W\cup W'$ is to be understood as a multiplicity two scheme structure on $W$).
By minimality of $i$, neither $W$ nor $W'$ are contained in $T(\Sigma_3)$,
so $T(\Sigma_2,\Sigma_3)$ also equals $W\cup W'$.

By Proposition \ref{prop:tri} (and the proof of part (1) of Theorem \ref{thm}),
the locus $T(\Lambda_{g-1}, \Lambda_g)$ is a translate of $\pm C$.
Thus, the left hand side of \eqref{eq:Tsum-again} contains a translate of the divisor $W\cup W'\pm C$.
By the Key Lemma \ref{lem:key},
the divisorial part of the right hand side of \eqref{eq:Tsum-again} is a theta-translate.
So we have an inclusion
\begin{equation*}
(W\cup W')_c \pm C \subseteq \Theta
\end{equation*}
for some point $c$,
which says that $(W\cup W')_c$ is contained in $T(\pm C)$.
But the latter is just $\pm W_{g-2}$, which is integral,
so it cannot contain a translate of $W\cup W'$,
and we have a contradiction.
\end{proof}

% \begin{rem}
% In the situation in Theorem \ref{thm}, the curve $C$ containing $\Gamma$
% can be recovered as the scheme theoretic intersection of all
% $2\Theta$-translates containing $\Gamma$.
% This is proved by Pareschi--Popa \cite[Corollary 4.3]{PPschottky} using the concept of
% continuous global generation.
% The proof applies verbatim also to nonreduced $\Gamma$.
% \end{rem}

\bibliographystyle{plain}
\bibliography{castelnuovo-schottky}

\end{document}